\newtheorem{theorem}{Theorem}[section]
 \newtheorem{cor}[theorem]{Corollary}
 \newtheorem{lemma}[theorem]{Lemma}
 \newtheorem{prop}[theorem]{Proposition}
 \theoremstyle{definition}
 \numberwithin{equation}{section}
\DeclareMathOperator{\End}{End}
\DeclareMathOperator{\Ker}{Ker}
\DeclareMathOperator{\im}{Im}
\begin{document}

\author{ Abyzov Adel Nailevich, Tran Hoai Ngoc Nhan
\\
and Truong Cong Quynh}
\address{Department of Algebra and Mathematical Logic, Kazan (Volga Region) Federal University, 18 Kremlyovskaya str., Kazan, 420008 Russia}
\email{aabyzov@ksu.ru, Adel.Abyzov@ksu.ru}
\address{Department of Mathematics, Danang University, 459 Ton Duc Thang, Danang city, Vietnam}
\email{tcquynh@dce.udn.vn, tcquynh@live.com}
\address{Department of Algebra and Mathematical Logic, Kazan (Volga Region) Federal University, 18 Kremlyovskaya str., Kazan, 420008 Russia}
\email{tranhoaingocnhan@gmail.com}
\title[Modules close to SSP- and SIP-modules]{Modules close to SSP- and SIP-modules}
\keywords{SSP-module, SIP-module, SIP-CS, CS-Rickarts}
\subjclass[2010]{16D40, 16D80}

\begin{abstract} In this paper, we investigate some properties  of
SIP, SSP and  CS-Rickart modules.  We give equivalent conditions for SIP and SSP modules; establish connections between the class of semisimple artinian rings and the class of SIP rings. It shows that $R$ is a semisimple artinian ring if and only if $R_R$ is SIP and every right $R$-module has a SIP-cover.  We also prove that $R$ is a semiregular ring and $J(R) = Z(R_R)$ if only if every finitely generated projective module is a SIP-CS module  which is also a $C2$ module.
\end{abstract}

\maketitle

\bigskip

\section{Introduction and notation}

Throughout this paper $R$ denotes an associative ring with identity, and modules will be unitary right $R$-modules. The Jacobson radical   ideal in $R$ is denoted by $J(R)$.   The notations $N \leq M$, $N \leq_e M$, $N \unlhd M$, or $N \subset_d M$ mean that $N$ is a submodule, an essential submodule, a fully invariant submodule, and  a direct summand of $M$,  respectively.   We refer to \cite{AF}, \cite{DHSW}, \cite{MM}, and \cite{W} for all the
undefined notions in this paper.

Recall that a module $M$ is called a \textit{SIP module} (respectively, \textit{SSP
module}) if the intersection (or the sum) of any two direct summands of $M$ is also a direct summand of
$M$ (see [\ref{Ga}, \ref{HS},  \ref{Wi}]). It is known that every Rickart right $R$-module $M$ (i.e., every endomorphism of $M$ has the kernel a direct summand)  has the SIP (see [\ref{GL1}, Proposition 2.16]) and every d-Rickart right $R$-module $M$ (i.e., every endomorphism of $M$ has the image  a direct summand) has the SSP  ([\ref{GL3}, Proposition 2.11]).

F. Karabacak and A. Tercan introduced the notion of  SIP-CS module in \cite{s2}. A module $M$ is called an \textit{SIP-CS module} if the intersection of any two direct summands of $M$
is essential in a direct summand of $M$. Note that SIP-CS modules are called SIP-extending modules
in \cite{s2}.  It is known that every CS-Rickart module has the SIP-CS (see [\ref{AB}, Proposition 1.(4)]).

In  this  paper, we provide some characterizations of SIP, SSP, SIP-CS and CS-Rickart modules.

\section{SIP modules and SSP modules}

Let $f: A\to B$ be a homomorphism. We denote by $\langle f \rangle$ the submodule of $A\oplus B$ as follows: $$\langle f \rangle=\{a+f(a)\ | \ a\in A\}.$$
 The following result is  obvious  and we can omit its proof.
\begin{lemma}Let $M=X\oplus Y$ and $f: A\to Y$, a homomorphism with $A\leq X$. Then
\begin{enumerate}
\item $A\oplus Y=\langle f \rangle\oplus Y$.
\item $\Ker(f)=X\cap \langle f \rangle$.
\end{enumerate}
\end{lemma}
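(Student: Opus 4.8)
The plan is to verify the two claims by direct computation, exploiting throughout that the ambient sum $X\oplus Y$ is direct, so that $X\cap Y=0$ and hence also $A\cap Y=0$ since $A\leq X$. In particular $A+Y$ is already a direct sum, which is why the notation $A\oplus Y$ is legitimate.

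For part (1), I would first note that each generator $a+f(a)$ of $\langle f\rangle$ lies in $A\oplus Y$ (as $a\in A$ and $f(a)\in Y$), so that $\langle f\rangle + Y\subseteq A\oplus Y$; the reverse containment is the substantive point. Given an arbitrary element $a+y$ with $a\in A$ and $y\in Y$, I would write $a+y=(a+f(a))+(y-f(a))$, observing that $a+f(a)\in\langle f\rangle$ while $y-f(a)\in Y$ because $f(a)\in Y$. This yields $A\oplus Y=\langle f\rangle + Y$. To promote this to a direct sum I would check $\langle f\rangle\cap Y=0$: if $a+f(a)\in Y$, then $a=(a+f(a))-f(a)\in Y$, and since $a\in A\leq X$ this forces $a\in X\cap Y=0$, whence $a+f(a)=0$.

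For part (2), I would argue by double inclusion. If $a\in\Ker(f)$, then $a=a+f(a)\in\langle f\rangle$ and $a\in A\leq X$, so $a\in X\cap\langle f\rangle$. Conversely, any element of $X\cap\langle f\rangle$ has the form $a+f(a)$ with $a\in A$ and $a+f(a)\in X$; since $a\in A\leq X$ we get $f(a)=(a+f(a))-a\in X\cap Y=0$, so $f(a)=0$ and the element equals $a\in\Ker(f)$. There is no genuine obstacle here: every step reduces to the defining identity for $\langle f\rangle$ together with the triviality of $X\cap Y$. The only point deserving a moment's attention is precisely that the two sums in question are direct, and this is exactly where the hypothesis $M=X\oplus Y$ (as opposed to a mere sum) is used.
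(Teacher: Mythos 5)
Your proof is correct and complete; the paper itself omits the argument entirely, declaring the lemma obvious. Your direct verification (membership computations plus the observation that $X\cap Y=0$ forces both sums to be direct and kills the relevant intersections) is exactly the routine argument the authors intended the reader to supply.
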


We next study some properties of SIP and SSP modules via homomorphisms:

\begin{prop}\label{nssp} The following conditions are equivalent for a module $M$:
\begin{enumerate}
\item $M$ is SSP.
\item For any split monomorphism $f: A\to M$ with $A$ a direct summand of $M$, $A+\im(f)$ is a direct summand of $M$.
\item For any split epimorphism $f: M\to M/A$ with $A$ a direct summand of $M$, $A+\Ker(f)$ is a direct summand of $M$.
\end{enumerate}
\end{prop}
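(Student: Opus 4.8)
The plan is to dispatch $(1)\Rightarrow(2)$ and $(1)\Rightarrow(3)$ at once and then to obtain both converses through a single device, the graph $\langle h\rangle$ of the preceding Lemma. For $(1)\Rightarrow(2)$, a split monomorphism $f$ has image a direct summand of $M$; since $A$ is also a direct summand, $A+\im(f)$ is a sum of two direct summands and hence a direct summand by the SSP. For $(1)\Rightarrow(3)$, a split epimorphism $f:M\to M/A$ has $\Ker(f)$ a direct summand (a section of $f$ splits $M=\Ker(f)\oplus\im(\text{section})$), so $A+\Ker(f)$ is again a sum of two direct summands and thus a direct summand by the SSP. In short, $(2)$ and $(3)$ are merely special cases of the SSP, and all the work lies in the converses.

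For the converses I would first reformulate $(2)$ and $(3)$ in a common form. Fix a decomposition $M=X\oplus Y$ and a homomorphism $h:X\to Y$, and form the graph $\langle h\rangle=\{x+h(x):x\in X\}$. By the Lemma, $X\oplus Y=\langle h\rangle\oplus Y$, so $\langle h\rangle$ is a direct summand of $M$; moreover $x\mapsto x+h(x)$ is a split monomorphism $X\to M$ with retraction the projection onto $X$, so $\langle h\rangle\cong X$, while simultaneously $M/\langle h\rangle\cong Y\cong M/X$. Hence $X$ and $\langle h\rangle$ are both isomorphic (feeding $(2)$) and cokernel-isomorphic (feeding $(3)$, via a split epimorphism $M\to M/X$ with kernel $\langle h\rangle$). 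In either case the hypothesis yields that $X+\langle h\rangle$ is a direct summand, and since $X+\langle h\rangle=X\oplus\im(h)$ this says exactly that $\im(h)$ is a direct summand of $Y$. Therefore each of $(2)$ and $(3)$ implies the condition $(\ast)$: for every decomposition $M=X\oplus Y$ and every $h:X\to Y$, the image $\im(h)$ is a direct summand of $Y$.

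It then remains to prove $(\ast)\Rightarrow(1)$, which is the heart of the matter. Given arbitrary direct summands $A,B$, I would choose $M=B\oplus B'$; since $A+B=B\oplus\pi_{B'}(A)$, where $\pi_{B'}$ is the projection along $B$, it suffices to show that $\pi_{B'}(A)$ is a direct summand of $B'$. The idea is to exploit that $A$ is itself a direct summand in order to write it as a graph over $\pi_B(A)\le B$, so that $\pi_{B'}(A)$ is exhibited as the image of a homomorphism landing in $B'$, to which $(\ast)$ applies.

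The main obstacle is precisely this realization step. Since $A$ and $B$ need be neither isomorphic nor complementary, one cannot present $\pi_{B'}(A)$ outright as the image of a map between a complementary pair, and the projection $\pi_B|_A$ need not be injective (its kernel is $A\cap B'$). The technical core is to choose the complement $B'$ and to organize the two nested applications of the Lemma, first to $A$ inside $B\oplus B'$ and then to the resulting homomorphism, so that $(\ast)$ can legitimately be invoked. Once $\pi_{B'}(A)$ is seen to be a direct summand of $B'$, we conclude that $A+B=B\oplus\pi_{B'}(A)$ is a direct summand, which completes $(2)\Rightarrow(1)$ and, through the same condition $(\ast)$, also $(3)\Rightarrow(1)$.
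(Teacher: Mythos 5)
Your trivial directions and your reduction of the converses are correct and coincide exactly with the paper's argument: $(1)\Rightarrow(2),(3)$ because sums of two direct summands are direct summands under SSP, and the graph construction $\langle h\rangle$ of the Lemma shows that each of $(2)$ and $(3)$ forces the condition $(\ast)$: for every decomposition $M=X\oplus Y$ and every $h\in\Hom(X,Y)$, $\im(h)$ is a direct summand of $Y$. (The only cosmetic difference is in $(3)$: you take the summand to be $X$ and the kernel to be $\langle h\rangle$, while the paper takes the summand to be $\langle h\rangle$ and the kernel to be $X$; both are fine.) But your proof stops exactly where the nontrivial content begins: the implication $(\ast)\Rightarrow(1)$, which you yourself call ``the heart of the matter,'' is never established. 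As written, neither $(2)\Rightarrow(1)$ nor $(3)\Rightarrow(1)$ is proven, so this is a genuine gap, not a deferred routine verification.

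Moreover, the obstacle you identify is real and your outlined strategy cannot close it as described. The submodule $\pi_{B'}(A)$ you need is the image of the endomorphism $(1-f)e$, where $e,f$ are idempotent endomorphisms with images $A,B$; but the images that $(\ast)$ controls are images of homomorphisms between \emph{complementary} summands, i.e.\ of endomorphisms $x$ satisfying $x=(1-g)xg$ for some idempotent $g$, and every such $x$ has $x^{2}=0$. Since $((1-f)e)^{2}\neq 0$ in general, no choice of the complement $B'$ and no single invocation of $(\ast)$ produces $\pi_{B'}(A)$, and the ``two nested applications of the Lemma'' are never exhibited. What closes this step in the paper is a silent appeal to a known characterization of the SSP due to Garcia \cite{G89} (the analogue for sums of Wilson's characterization of the SIP \cite{Wi}): $M$ has the SSP if and only if for every decomposition $M=A\oplus B$ and every $h\in\Hom(A,B)$, $\im(h)$ is a direct summand --- that is, precisely $(\ast)\Leftrightarrow(1)$. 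The paper's proof is your reduction followed by that lemma. To repair your proposal, either quote this result where you invoke $(\ast)\Rightarrow(1)$, or supply its (nontrivial) proof; the argument you sketch does not substitute for it.
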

\begin{proof} $(1)\Rightarrow (2), (3)$ are obvious.

$(2)\Rightarrow (1)$. Assume that $M=A_1\oplus A_2$ and $f:A_1\to A_2$ an $R$-homomorphism. Call $T=\langle f \rangle$  a submodule of $M$ and hence  $M=T\oplus A_2$. We consider the homomorphism $\psi:A_1\to M$ given by $\psi(x)=x+f(x)$. It is easily  to see that $\psi$ is a split monomorphism. By (2), $A_1+\psi(A_1)=A_1+T$ is a direct summand of $M$. Furthermore, $A_1+T=A_1\oplus \im(f)$, which implies $\im(f)$ is a direct summand of $A_2$.

$(3)\Rightarrow (1)$.  Suppose  that $M=A_1\oplus A_2$ and $f:A_1\to A_2$ an $R$-homomorphism. Let  $T=\langle f \rangle$  be a submodule of $M$. Then  $M=T\oplus A_2$. Call  the homomorphism $\psi:M\to M/T$ given by $\psi(a_1+a_2)=a_2+T$ for all $a_1\in A_1, a_2\in A_2$. Clearly,  $\psi$ is a split epimorphism and $\Ker(\psi)=A_1$. By (3), $A_1+T$ is a direct summand of $M$. On the other hand, $A_1+T=A_1\oplus \im(f)$, which implies $\im(f)$ is a direct summand of $A_2$.
\end{proof}

\begin{cor}The following conditions are equivalent for a module $M$:
\begin{enumerate}
\item $M$ is SSP.
\item For any two direct summands $A_1$ and $A_2$ with $A_1\simeq A_2$, then   $A_1+A_2$ is a direct summand of $M$.
\end{enumerate}
\end{cor}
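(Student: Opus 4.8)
The plan is to derive everything from Proposition \ref{nssp}, so that no direct construction with graphs of homomorphisms is needed. The implication $(1)\Rightarrow(2)$ is immediate: if $M$ is SSP, then by definition the sum of \emph{any} two direct summands of $M$ is again a direct summand, and a pair of isomorphic direct summands is in particular such a pair, so $A_1+A_2$ is a direct summand.

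For the converse $(2)\Rightarrow(1)$, the idea is to verify condition (2) of Proposition \ref{nssp}, which is equivalent to $M$ being SSP. So I would start with an arbitrary split monomorphism $f:A\to M$ where $A$ is a direct summand of $M$, and aim to show that $A+\im(f)$ is a direct summand of $M$. The point is that the hypothesis (2) of the corollary already supplies exactly this conclusion once one observes two facts about $\im(f)$.

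First, since $f$ is a \emph{split} monomorphism, its image $\im(f)$ is a direct summand of $M$; thus both $A$ and $\im(f)$ are direct summands of $M$. Second, since $f$ is a monomorphism, it restricts to an isomorphism $A\xrightarrow{\ \sim\ }f(A)=\im(f)$, so $A\simeq\im(f)$. Hence $A$ and $\im(f)$ are two isomorphic direct summands of $M$, and applying the hypothesis $(2)$ to this pair yields that $A+\im(f)$ is a direct summand of $M$. This is precisely condition (2) of Proposition \ref{nssp}, so $M$ is SSP.

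I do not expect a genuine obstacle here, as the argument is essentially a matching of hypotheses. The only step that requires a moment's care is confirming that the two summands produced are genuinely isomorphic and genuinely direct summands simultaneously, i.e.\ that ``split monomorphism'' delivers both the direct-summand property of $\im(f)$ and the isomorphism $A\simeq\im(f)$; once this is noted, the equivalence follows directly from Proposition \ref{nssp}.
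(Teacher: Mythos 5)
Your proof is correct and follows exactly the route the paper intends: the corollary is stated as an immediate consequence of Proposition \ref{nssp}, and your verification that a split monomorphism $f:A\to M$ produces a direct summand $\im(f)\simeq A$ (and conversely that an isomorphism between direct summands yields such a split monomorphism) is precisely the matching of hypotheses that justifies it. No gaps; nothing further is needed.
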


Similarly with SIP, we also have some characterizations of SIP-modules:

\begin{prop}The following conditions are equivalent for a module $M$:
\begin{enumerate}
\item $M$ is SIP.
\item For any split monomorphism $f: A\to M$ with $A$ a direct summand of $M$, $A\cap f(A)$ is a direct summand of $M$.
\item For any split epimorphism $f: M\to M/A$ with $A$ a direct summand of $M$, $A\cap \Ker(f)$ is a direct summand of $M$.
\end{enumerate}
\end{prop}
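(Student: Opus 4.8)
The plan is to imitate the proof of Proposition~\ref{nssp} verbatim, interchanging sums with intersections and images with kernels, and invoking part (2) of the Lemma in place of part (1). The implications $(1)\Rightarrow(2)$ and $(1)\Rightarrow(3)$ I would dispose of directly: if $f\colon A\to M$ is a split monomorphism with $A\subset_d M$, then $f(A)=\im(f)\subset_d M$, so $A\cap f(A)$ is the intersection of two direct summands and hence a direct summand by the SIP; dually, if $f\colon M\to M/A$ is a split epimorphism with $A\subset_d M$, then $\Ker(f)\subset_d M$ and $A\cap\Ker(f)$ is again an intersection of two direct summands.

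For $(2)\Rightarrow(1)$ I would fix an arbitrary decomposition $M=A_1\oplus A_2$ and an arbitrary homomorphism $f\colon A_1\to A_2$, and form the graph $T=\langle f\rangle$. By the Lemma, $M=T\oplus A_2$ and $\Ker(f)=A_1\cap T$. The map $\psi\colon A_1\to M$, $\psi(x)=x+f(x)$, is a split monomorphism, since it is injective and $\im(\psi)=T\subset_d M$, and its domain $A_1$ is a direct summand; thus condition (2) applies and yields that $A_1\cap\psi(A_1)=A_1\cap T=\Ker(f)$ is a direct summand of $M$.

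For $(3)\Rightarrow(1)$, keeping the same $M=A_1\oplus A_2$, $f$, and $T=\langle f\rangle$, I would instead use the map $\psi\colon M\to M/T$ given by $\psi(a_1+a_2)=a_2+T$. One checks that $\psi$ is a split epimorphism with $\Ker(\psi)=A_1$ (the complement $A_2$ maps isomorphically onto $M/T$, giving the splitting). Here the direct summand playing the role of $A$ in (3) is $T$, the codomain being $M/T$, so condition (3) gives that $T\cap\Ker(\psi)=T\cap A_1=\Ker(f)$ is a direct summand, once more by part (2) of the Lemma.

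The formal points are merely the two splitting checks for $\psi$ and the identity $\Ker(f)=A_1\cap T$, which is immediate from the Lemma. The step that carries the real content is the passage from ``$\Ker(f)$ is a direct summand for every decomposition $M=A_1\oplus A_2$ and every $f\colon A_1\to A_2$'' to ``$M$ is SIP''. This is the kernel-side characterization of the SIP, exactly dual to the image-side characterization of the SSP that is used (implicitly) at the end of the proof of Proposition~\ref{nssp}; it is where arbitrary pairs of direct summands must be reconciled with the graph construction, and I expect it to be the main conceptual obstacle, the graph manipulations themselves being routine.
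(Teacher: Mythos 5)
Your proposal is correct and is essentially the paper's own proof: the paper gives no separate argument for this proposition (it is introduced with ``Similarly with SIP\dots''), the intended proof being exactly the dualization of Proposition~\ref{nssp} that you wrote out --- the graph $T=\langle f\rangle$, the split monomorphism $\psi(x)=x+f(x)$ (respectively the split epimorphism $M\to M/T$ with kernel $A_1$, applied with $T$ in the role of $A$), and part (2) of the Lemma to identify $A_1\cap T$ with $\Ker(f)$.

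The one step you flag as the ``main conceptual obstacle'' --- passing from ``$\Ker(f)$ is a direct summand for every $f\colon A_1\to A_2$ with $M=A_1\oplus A_2$'' to ``$M$ is SIP'' --- is left equally implicit by the paper, whose proof of Proposition~\ref{nssp} ends by silently invoking the dual, image-side characterization of the SSP (from \cite{G89}). Your step is the known kernel-side characterization of the SIP due to Wilson \cite{Wi}, and it closes in a few lines if you want the argument self-contained: given direct summands $K=qM$ and $L=pM$ with $p,q$ idempotent endomorphisms of $M$, apply the hypothesis to $(1-q)p|_{qM}\colon qM\to(1-q)M$ to conclude that $N=\{x\in qM : p(x)\in qM\}$ is a direct summand of $M$; since $p(N)\subseteq N$ and $p^2=p$, the restriction $p|_N$ is an idempotent endomorphism of $N$ whose image is exactly $K\cap L$, so $K\cap L$ is a direct summand of $N$ and hence of $M$.
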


\begin{cor}The following conditions are equivalent for a module $M$:
\begin{enumerate}
\item $M$ is SIP.
\item For any two direct summands $A_1$ and $A_2$ with $A_1\simeq A_2$, then   $A_1\cap A_2$ is a direct summand of $M$.
\end{enumerate}
\end{cor}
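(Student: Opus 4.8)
The plan is to reduce everything to the characterization of SIP modules established in the preceding Proposition, which rephrases the SIP property purely in terms of split monomorphisms. The implication $(1)\Rightarrow(2)$ requires no work: if $M$ is SIP then, by definition, the intersection of \emph{any} two direct summands is a direct summand of $M$, so the conclusion holds a fortiori when the two summands happen to be isomorphic.

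For the substantive direction $(2)\Rightarrow(1)$, I would verify condition (2) of the preceding Proposition, i.e.\ that for every split monomorphism $f: A\to M$ with $A$ a direct summand of $M$, the submodule $A\cap \im(f)$ is a direct summand of $M$; by that Proposition this is equivalent to $M$ being SIP. The key observation is that a split monomorphism produces exactly a pair of isomorphic summands: since $f$ splits, its image $\im(f)=f(A)$ is a direct summand of $M$, and $f$ is an isomorphism of $A$ onto $f(A)$, so $A\simeq f(A)$. Hence $A$ and $f(A)$ are two isomorphic direct summands of $M$.

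At this point hypothesis (2) applies verbatim to the pair $A$, $f(A)$, giving that $A\cap f(A)=A\cap \im(f)$ is a direct summand of $M$. This is precisely condition (2) of the preceding Proposition, so $M$ is SIP, and the equivalence follows. I do not expect a genuine obstacle in this argument: the entire content is recognizing that the split-monomorphism condition of the Proposition is really a statement about isomorphic summands, so the hypothesis plugs in directly. The only points needing (routine) care are the two standard facts that the image of a split monomorphism is a direct summand and that $f$ restricts to an isomorphism onto that image.
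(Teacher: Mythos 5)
Your proposal is correct and matches the paper's intent: the paper states this corollary without proof precisely because it follows from the preceding Proposition, and your argument—noting that a split monomorphism $f:A\to M$ yields the pair of isomorphic direct summands $A$ and $f(A)$, so hypothesis (2) gives that $A\cap f(A)$ is a direct summand, which is condition (2) of that Proposition—is exactly the intended derivation. The two routine facts you flag (the image of a split monomorphism is a direct summand, and $f$ restricts to an isomorphism onto it) are indeed all that needs checking.
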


\begin{prop} Let $R$ be a ring,  $M$ an $R$-$R$-bimodule and $T = R\propto M$
the corresponding trivial extension. The following conditions are equivalent:
\begin{enumerate}
\item  $T$  has the SSP
\item \begin{enumerate}
        \item  R has the SSP.
        \item  For every regular $x$ of $R$ with $x=xyx$,  we have $xM(1-xy) = 0$.
        \end{enumerate}
\end{enumerate}
\end{prop}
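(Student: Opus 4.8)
The plan is to reduce everything to an explicit description of the direct summands of $T_T$. Since $0\propto M$ is a square-zero ideal, the idempotents of $T$ are exactly the pairs $(e,c)$ with $e^2=e$ in $R$ and $c\in eM(1-e)\oplus(1-e)Me$; writing $c=c_{12}+c_{21}$ for the two Peirce components, a direct computation gives
$$(e,c)T=\{(s,n)\mid s\in eR,\ (1-e)n=c_{21}s\},$$
so that, as a subset of $T$, the summand $(e,c)T$ depends only on $e$ and on the ``lower'' twist $c_{21}\in(1-e)Me$, and every direct summand of $T$ arises this way. First I would record the dictionary I need: the first-coordinate projection $\pi\colon T\to R$ is a ring epimorphism carrying $(e,c)T$ onto $eR$, and each unit $(1,w)$ of $T$ conjugates $(e,0)$ into the idempotent $(e,\,we-ew)$, whose twist is $(1-e)we$.

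The technical heart is the following claim, which I would prove by a fibre count: if $S_1=(e,0)T$ and $S_2=(e,\delta)T$ are two summands over the same idempotent $e$, and $S_2$ has twist $\beta=(1-e)\delta e$, then $S_1+S_2$ is a direct summand of $T$ if and only if $\beta=0$. Indeed, the elements of $S_1+S_2$ with zero first coordinate have $(1-e)$-part ranging over $\beta\cdot eR$, whereas any summand whose first projection is $eR$ has fibre over $0$ equal exactly to $eM$; since $\beta\cdot eR\subseteq(1-e)M$ meets $eM$ only in $0$, being a summand forces $\beta=0$.

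For $(1)\Rightarrow(2)$ I would argue as follows. For (a), given idempotents $e_1,e_2$ of $R$, the summand $(e_1,0)T+(e_2,0)T$ of $T$ is carried by $\pi$ onto $e_1R+e_2R$, which is therefore a summand of $R$; hence $R$ has the SSP. For (b), I would first show $(1-e)Me=0$ for every idempotent $e$: taking $S_1=(e,0)T$ and letting $S_2$ be the conjugate of $(e,0)T$ by the unit $(1,\mu)$ (so $S_2$ has twist $(1-e)\mu e$), the SSP of $T$ together with the claim above forces $(1-e)\mu e=0$ for all $\mu\in M$. As this holds for every idempotent, applying it also to $1-e$ yields $eM(1-e)=0$, so every idempotent is central for $M$, i.e.\ $e\mu=\mu e$. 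Finally, for a regular $x$ with $x=xyx$ and $e=xy$, centrality of $e$ gives $xm=e(xm)=(xm)e=xme$ for all $m$, that is $xM(1-xy)=0$.

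For $(2)\Rightarrow(1)$ the same reformulation makes the argument short: condition (b) applied to the regular element $1-e$ gives $(1-e)Me=0$ for every idempotent $e$, so in the description above every twist $c_{21}\in(1-e)Me$ vanishes, and hence every direct summand of $T$ has the diagonal form $eR\propto eM$. Given two such summands $e_1R\propto e_1M$ and $e_2R\propto e_2M$, their sum is $(e_1R+e_2R)\propto(e_1M+e_2M)$; using the SSP of $R$ I write $e_1R+e_2R=e'R$ with $e'^2=e'$, whereupon $e_1M+e_2M=e'M$ follows formally from $e_i=e'e_i$ and $e'\in e_1R+e_2R$. Thus the sum equals $e'R\propto e'M=(e',0)T$, a direct summand, and $T$ has the SSP. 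The main obstacle throughout is the first step---pinning down the summands of $T$ and observing that the twist is confined to $(1-e)Me$---after which condition (b) is precisely the assertion that all such twists are forced to be zero.
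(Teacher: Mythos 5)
Your proof is correct, but it takes a genuinely different route from the paper's. The paper disposes of both directions by citing Garcia's Proposition 4.5 \cite{G89}, which characterizes when a trivial extension $R\propto M$ has the SSP (namely: $R$ has the SSP and $eM(1-e)=0$ for every idempotent $e$ of $R$); all the paper itself supplies is the short computation $z=xm(1-xy)=(xy)z(1-xy)$ translating that idempotent condition into the regular-element condition (b) (in fact the paper's labelling is slightly garbled: the computation displayed under $(2)\Rightarrow(1)$ is really the translation needed for $(1)\Rightarrow(2)$, while $(2)\Rightarrow(1)$ only needs that (b) applied to an idempotent $x=e$ recovers Garcia's condition). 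What you have done is reprove the content of the cited proposition from scratch, and your steps all check: the idempotents of $T$ are exactly the $(e,c)$ with $c\in eM(1-e)\oplus(1-e)Me$; the summand $(e,c)T=\{(s,n)\mid s\in eR,\ (1-e)n=c_{21}s\}$ depends only on $e$ and the twist $c_{21}$; the fibre-over-zero count correctly forces the twist of $(e,0)T+(e,\delta)T$ to vanish when the sum is a summand (since any summand projecting onto $eR$ has fibre exactly $eM$, while the sum's fibre is $eM\oplus\beta eR$ and $\beta=\beta e$); and conjugation by the units $(1,\mu)$ produces exactly the twisted summands needed to extract $(1-e)Me=0$ from the SSP. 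Your final translations (centrality of $e=xy$ on $M$ yields (b); (b) applied to $1-e$ yields $(1-e)Me=0$, so all summands are diagonal $eR\propto eM$ and the SSP of $R$ finishes) agree in substance with the paper's two-line computation. What the paper's route buys is brevity; what yours buys is a complete, reference-free argument that also exposes the structure doing the work — every summand of $T$ is determined by an idempotent of $R$ plus a twist in $(1-e)Me$, and condition (b) is precisely the statement that all twists vanish.
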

\begin{proof}
$(1)\Rightarrow (2).$ By \cite[Proposition 4.5]{G89}.

$(2)\Rightarrow (1).$ Assume that  $x=xyx$. For any $m\in M$, call   $z=xm(1-xy)$. It follows that  $z=(xy)z(1-xy)$. Note that $xy$ is idempotent of $R$. By \cite[Proposition 4.5]{G89}, $z=(xy)z(1-xy)=0.$
\end{proof}
Let $R$ be a ring and $\Omega$,  a class of right $R$-modules which is closed under
isomorphisms and summands. According to  Enochs in \cite{Enochs}, we study  the
notion of $\Omega$-envelope and the notion of $\Omega$-cover:

An $R$-homomorphism $g : M \to  E$ is called an {\it $\Omega$-envelope of a right $R$-module $M$}; if $E \in \Omega$ such that any diagram:
$$\begin{diagram}
\node{M}\arrow{s,t}{g'}\arrow{e,t}{g}\node{E}\arrow{sw,t,..}{h}\\
\node{E'}
\end{diagram}
$$
with $E'\in \Omega$, can be completed, and the diagram:
$$\begin{diagram}
\node{M}\arrow{s,t}{g}\arrow{e,t}{g}\node{E}\arrow{sw,t,..}{h}\\
\node{E}
\end{diagram}
$$
can be completed only by an automorphism $h$.

An $R$-homomorphism $g : E \to  M$ is called {\it an $\Omega$-cover of a right R-module $M$}; if $E \in \Omega$ such that any diagram:
$$\begin{diagram}
\node{E}\arrow{e,t}{g}\node{M}\\
\node[2]{E'}\arrow{nw,t,..}{h}\arrow{n,t}{g'}
\end{diagram}
$$
with $E'\in \Omega$, can be completed, and the diagram:
$$\begin{diagram}
\node{E}\arrow{e,t}{g}\node{M}\\
\node[2]{E}\arrow{nw,t,..}{h}\arrow{n,t}{g}
\end{diagram}
$$
can be completed only by an automorphism $h$.
\vskip 0.2cm

A right $R$-module $M$ is called a {\it $C3$-module} if
whenever $A$ and $B$ are direct summands  of $M$ with  $A\cap B=0$, then $A\oplus B$ is a direct summand of $M$. Dually, $M$ is called a {\it $D3$-module} if  whenever $M_{1}$ and $M_{2}$ are
direct summands of $M$ and \ $M=M_{1}+M_{2},$ then $M_{1}\cap M_{2}$ is a
direct summand of $M$.

\begin{prop}\label{sed3} The following conditions are equivalent for a ring $R$:

\begin{enumerate}
\item $R$ is a semisimple artinian ring.
\item Every right $R$-module has a D3-cover.
\item Every $2$-generated right $R$-module has a D3-cover.
\item Every right $R$-module has a D3-envelope.
\item Every $2$-generated right $R$-module has a D3-envelope.
\end{enumerate}
\end{prop}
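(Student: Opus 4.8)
The plan is to prove the two chains $(1)\Rightarrow(2)\Rightarrow(3)\Rightarrow(1)$ and $(1)\Rightarrow(4)\Rightarrow(5)\Rightarrow(1)$, which share the same hard step. For $(1)\Rightarrow(2)$ and $(1)\Rightarrow(4)$ I would note that over a semisimple artinian ring every submodule is a direct summand, so every right $R$-module is trivially a D3-module; hence the class $\Omega$ of D3-modules is all of $\ModR$, and for any $M$ the identity $1_M\colon M\to M$ satisfies both universal diagrams, serving simultaneously as a D3-cover and a D3-envelope. The implications $(2)\Rightarrow(3)$ and $(4)\Rightarrow(5)$ are immediate, since a $2$-generated module is in particular a module. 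Thus the whole content lies in the converse steps $(3)\Rightarrow(1)$ and $(5)\Rightarrow(1)$.

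For these I would reduce semisimplicity of $R$ to the statement that every right ideal $I\leq R_R$ is a direct summand of $R$. Fix such an $I$ and consider the $2$-generated test module $M=R\oplus R/I$. Writing $f\colon R\to R/I$ for the canonical projection and applying the opening Lemma with $X=R$, $Y=R/I$, $A=X$, I obtain that $\langle f\rangle$ is a direct summand of $M$ (complement $0\oplus R/I$) and that $(R\oplus 0)\cap\langle f\rangle=\Ker(f)=I\oplus 0$. Setting $M_1=R\oplus 0$ and $M_2=\langle f\rangle$, both are direct summands, $M_1+M_2=M$, and $M_1\cap M_2=I\oplus 0$. The key elementary observation is a modular-law computation: if $I\oplus 0$ is a direct summand of $M$, say $M=(I\oplus 0)\oplus C$, then applying the modular law inside $\Ker(\rho)=R\oplus 0$ for the projection $\rho\colon M\to R/I$ (which contains $I\oplus 0$) yields $R\oplus 0=(I\oplus 0)\oplus(\Ker(\rho)\cap C)$, so that $I$ is a direct summand of $R$. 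Hence it suffices to force $M_1\cap M_2$ to be a direct summand of $M$, i.e. to force $M$ to witness the D3 property for this particular pair.

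To extract this from the hypotheses I would use the D3-cover (resp. envelope) of $M$ together with the fact that $M_1\cong R$ and $M_2\cong R$ are projective. Given a D3-cover $g\colon E\to M$, I would first argue that $g$ is surjective (by factoring a free presentation of $M$ through $g$, once the relevant free summands are seen to lie in $\Omega$), and then lift each split inclusion $M_i\hookrightarrow M$ along $g$; projectivity of $M_i$ makes each lift a split monomorphism, so its image $E_i$ is a direct summand of $E$ with $g(E_i)=M_i$. Since $M_1+M_2=M$, the submodule $E_1+E_2$ maps onto $M$, and I would use the minimality clause of the cover (every $h\colon E\to E$ with $gh=g$ being an automorphism, which plays the role of $\Ker(g)$ being superfluous) to reduce to $E_1+E_2=E$; then the D3 property of $E$ makes $E_1\cap E_2$ a direct summand, and transporting back through $g$ makes $M_1\cap M_2=I\oplus 0$ a direct summand of $M$, finishing via the modular-law step. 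For $(5)\Rightarrow(1)$ I would run the dual argument with a D3-envelope $g\colon M\to E$, using injectivity and essentiality in place of projectivity and superfluousness, and the envelope's minimality to push the D3 property of $E$ back onto $M$.

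The main obstacle I anticipate is precisely this last transfer: deducing that $M$ itself satisfies the relevant instance of D3 from the mere existence of a D3-cover (resp. envelope), since a module need not lie in its own cover class. The crux is to show that $g$ is forced to be an isomorphism on the configuration $(M_1,M_2)$ — equivalently that $\Ker(g)$ contributes nothing — which is where the minimality axiom of the Enochs cover (and, dually, of the envelope) must be used in an essential way, not merely the precover/preenvelope factorization. Once that is secured, the modular-law identity converts the D3-conclusion into ``$I$ is a direct summand of $R$'' for every right ideal $I$, so that $R_R$ is semisimple and hence $R$ is semisimple artinian.
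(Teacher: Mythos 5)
Your easy directions, your test-module construction, and the modular-law endgame are all fine; the problem is exactly the step you flag as ``the main obstacle,'' and the mechanism you propose for it does not work. (i) Minimality of the cover cannot force $E_1+E_2=E$. To invoke minimality you must produce some $h\colon E\to E$ with $gh=g$, and the only way to get one from $E_1+E_2$ is to factor $g$ through $g|_{E_1+E_2}$; that would require $E_1+E_2$ to lie in $\Omega$ and $g|_{E_1+E_2}$ to be a precover, neither of which is available (D3 says nothing about $E_1+E_2$, since D3 only controls intersections of summands \emph{whose sum is the whole module}, so $E_1+E_2$ need not even be a direct summand of $E$). Your parenthetical remark that minimality ``plays the role of $\Ker(g)$ being superfluous'' is precisely the false leap: for projective covers that equivalence holds because projectivity of $E$ lets one lift $g$ along the epimorphism $g|_K$ whenever $K+\Ker(g)=E$, but a D3-cover need not be projective, and for general Enochs covers the kernel is not superfluous. (ii) Even granting $E_1+E_2=E$, the transport back fails: the two lifts $\lambda_1,\lambda_2$ need not agree on $M_1\cap M_2$, so one only gets $g(E_1\cap E_2)\subseteq M_1\cap M_2$, possibly strictly; and the image under $g$ of a direct summand of $E$ need not be a direct summand of $M$. (iii) The dual direction is broken for the same reason: with your test module the two projections $M\to M_1$ and $M\to M_2$ both annihilate $0\oplus R/I$, so they cannot be assembled into a left inverse of the envelope map.

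The root cause is your choice of test module: in $M=R\oplus R/I$ with $I$ an \emph{arbitrary} right ideal, the summand $R/I$ need not be a D3-module, so $M$ is not a direct sum of modules in $\Omega$ and there is no handle for pushing the D3 property of $E$ down to $M$. The paper avoids this by taking $I$ maximal, i.e.\ $M=R_R\oplus S$ with $S$ simple: then \emph{both} summands lie in $\Omega$ ($S$ trivially; $R_R$ because projective modules satisfy D2, hence D3), so the precover property alone yields $\beta_1,\beta_2$ factoring the two inclusions $S\to M$, $R_R\to M$ through $\alpha\colon C\to M$, and $\alpha(\beta_1\oplus\beta_2)=\mathrm{id}_M$ exhibits $M$ as a direct summand of $C$; since D3 passes to direct summands, $M$ itself is D3, whence $\Ker(\varphi)$ is a direct summand of $R_R$ by \cite[Proposition 4]{YAI}, every simple module is projective, and $R$ is semisimple. (The envelope case is the exact dual, extending the two projections.) Note that this argument never uses minimality, only the precover/preenvelope factorization. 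Your proof becomes correct, and essentially identical to the paper's, once you restrict to maximal right ideals and replace the minimality/transport mechanism by this splitting trick.
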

\begin{proof}
$(1)\Rightarrow (2)\Rightarrow (3)$. Clear.

$(3)\Rightarrow (1)$. Let $S$ be  a simple  right  $R$-module. Call $\varphi: R_R\to S$ an epimorphism. By (3), $M=R_R \oplus S$   has a D3-cover, say $\alpha : C \rightarrow M$ where $C$ is a D3-module.   Let $ \iota_1: S\rightarrow M$  and $\iota_2: R_R\rightarrow M$ be the inclusion maps for all $i=1,2 $. Note that   $S$ and $R_R$ are   D3-modules, and  there are homomorphisms $\beta_1: S\rightarrow C, \beta_2: R_R\rightarrow C$  such that $\alpha\beta_i=\iota_i$.  Clearly, $id_M= \iota_1\oplus \iota_2=\alpha(\beta_1\oplus \beta_2)$. This shows that $M$  is isomorphic to a direct summand of $C$, which implies that  $M$  is a D3-module. We deduce that $\Ker(\varphi)$ is a direct summand of $R_R$ by \cite[Proposition 4]{YAI}. It follows that    $S$ is a projective module. Thus $R$ is semisimple.

$(1)\Rightarrow (4)\Rightarrow (5)$. Clear.

$(5)\Rightarrow (1)$. Let $S$ be a simple  right  $R$-module. Call $\varphi: R_R\to S$ an epimorphism. By (5), $M=R_R \oplus S$   has a D3-envelope,  named $\iota: M \rightarrow    E$ where   $E$ is a D3-module.   Since  $S$ and $R$ are D3-modules, there exist $f_1: E\rightarrow S,\ f_2: E\rightarrow R$ such that $f_i\iota=\pi_i$, where  $\pi_1: M\rightarrow S$ and $\pi_2: M\rightarrow R$ are   the projections. There exists $\phi: E\rightarrow M$ such that $\pi_i\phi=f_i$ for all $i=1,2$. It follows that $\phi\iota=id_M$, and hence  $\iota$  is a split monomorphism. Thus  $N \oplus E(N)$ is isomorphic to a direct summand of $E$. This gives that    $S \oplus R$ is also a D3-module.  We deduce that    $\Ker(\varphi)$ is a direct summand of $R_R$. So    $S$ is a projective module. Thus $R$ is semisimple.
\end{proof}

\begin{cor}
The following conditions are equivalent for a ring $R$:
\begin{enumerate}
\item $R$ is a semisimple artinian ring.
\item $R_R$ is SIP and every right $R$-module has a SIP-cover.
\item $R_R$ is SIP and every $2$-generated right $R$-module has a SIP-cover.
\item $R_R$ is SIP and every right $R$-module has a SIP-envelope.
\item $R_R$ is SIP and every $2$-generated right $R$-module has a SIP-envelope.
\end{enumerate}
\end{cor}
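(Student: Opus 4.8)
The plan is to mirror Proposition~\ref{sed3}, replacing the class of D3-modules by the class $\Omega$ of SIP-modules, which is closed under isomorphisms and direct summands exactly as the definitions of $\Omega$-cover and $\Omega$-envelope require. I would establish two cycles of implications: $(1)\Rightarrow(2)\Rightarrow(3)\Rightarrow(1)$ for covers and $(1)\Rightarrow(4)\Rightarrow(5)\Rightarrow(1)$ for envelopes. The implications $(1)\Rightarrow(2)$ and $(1)\Rightarrow(4)$ follow because over a semisimple artinian ring every module is semisimple and hence SIP, so in particular $R_R$ is SIP and the identity map $id_M$ is simultaneously a SIP-cover and a SIP-envelope of every module $M$ (the lifting/extension condition and the automorphism condition both holding trivially). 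The implications $(2)\Rightarrow(3)$ and $(4)\Rightarrow(5)$ are immediate, since a $2$-generated module is in particular a module.

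The substance of the corollary is $(3)\Rightarrow(1)$ and its dual $(5)\Rightarrow(1)$. Fix a simple right module $S$, an epimorphism $\varphi:R_R\to S$, and put $M=R_R\oplus S$, a $2$-generated module. In the cover case, let $\alpha:C\to M$ be a SIP-cover with $C$ SIP. This is exactly where the standing hypothesis that $R_R$ is SIP enters: both summands of $M$ lie in $\Omega$, since $S$ is simple (hence trivially SIP) and $R_R$ is SIP by assumption, so the inclusions $\iota_1:S\to M$ and $\iota_2:R_R\to M$ lift through $\alpha$ to maps $\beta_1,\beta_2$ with $\alpha\beta_i=\iota_i$. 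Assembling them gives $\alpha(\beta_1\oplus\beta_2)=id_M$, so $M$ is isomorphic to a direct summand of $C$ and therefore is itself SIP. In the envelope case I would instead extend the two projections $M\to S$ and $M\to R_R$ across the envelope $\iota:M\to E$ and assemble the results into a retraction $\phi$ with $\phi\iota=id_M$, again realizing $M$ as a direct summand of a SIP-module.

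With $M=R_R\oplus S$ now known to be SIP, I would finish using the graph Lemma stated at the beginning of this section. Applying it with $X=R_R$, $Y=S$, $A=R_R$ and $f=\varphi$ shows that $\langle\varphi\rangle$ is a direct summand of $M$ and that $R_R\cap\langle\varphi\rangle=\Ker(\varphi)$. Since $R_R$ is also a direct summand of $M$ and $M$ is SIP, $\Ker(\varphi)$ is a direct summand of $M$; being contained in the summand $R_R$, it is then a direct summand of $R_R$ by the modular law. Hence $S\cong R_R/\Ker(\varphi)$ is projective, and as $S$ was an arbitrary simple module, $R$ is semisimple artinian.

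I expect the only genuine obstacle to be careful bookkeeping rather than a conceptual hurdle: verifying that $\Omega$ really meets the closure requirements built into the definitions of cover and envelope, that SIP descends to direct summands (again via the modular-law argument), and that the descent of $\Ker(\varphi)$ from a summand of $M$ to a summand of $R_R$ is legitimate. The one place to stay alert is the dual envelope argument, where every map reverses direction and one must assemble the two extensions into the correct retraction $\phi$ so that $\phi\iota=id_M$.
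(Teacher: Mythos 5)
Your proposal is correct and takes essentially the same route as the paper: the corollary is stated there without a separate proof precisely because it is the direct adaptation of Proposition~\ref{sed3}, with the class of D3-modules replaced by SIP-modules, the added hypothesis that $R_R$ is SIP guaranteeing that both summands of $M=R_R\oplus S$ lie in the class $\Omega$ so the cover/envelope maps can be lifted or extended, and the final splitting of $\Ker(\varphi)$ obtained from the graph Lemma exactly as you describe. No gaps.
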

A ring $R$ is called a right {\it V-ring} if every simple right $R$-module is injective.

\begin{prop}\label{pro:2.8} The following conditions are equivalent for a ring $R$:
\begin{enumerate}
\item $R$ is a right  V-ring.
\item Every finitely cogenerated  right $R$-module has a C3-envelope.
\item Every finitely cogenerated  right $R$-module has a C3-cover.
\end{enumerate}
\end{prop}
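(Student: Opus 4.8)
The plan is to follow the pattern of Proposition~\ref{sed3}, but with the test module and the structural input dualized: projectivity and arbitrary modules are replaced by injectivity and finitely cogenerated modules, and the role of $R_R\oplus S$ is taken over by $E(S)\oplus S$, where $E(S)$ denotes the injective hull of a simple module $S$.

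For $(1)\Rightarrow(2)$ and $(1)\Rightarrow(3)$ I would first record the structural fact that over a right V-ring every finitely cogenerated module is semisimple. Indeed, if $M$ is finitely cogenerated then $\Soc(M)$ is a finite direct sum of simple modules and is essential in $M$; since every simple module is injective, $\Soc(M)$ is injective, hence a direct summand of $M$, and being essential it must equal $M$. A semisimple module is trivially a C3-module, so every finitely cogenerated module is already a C3-module. Consequently $\mathrm{id}_M\colon M\to M$ is simultaneously a C3-envelope and a C3-cover of $M$, which settles both implications at once.

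For the converses I would use that the class $\Omega$ of C3-modules is closed under isomorphisms and direct summands (a summand $N$ of a C3-module $M$ is C3: two summands of $N$ with zero intersection are summands of $M$, their direct sum is a summand of $M$ contained in $N$, hence a summand of $N$ by modularity). Fix a simple module $S$, put $E=E(S)$ and $M=E\oplus S$; then $\Soc(M)=\Soc(E)\oplus S=S\oplus S$ is finitely generated and essential, so $M$ is finitely cogenerated. Both summands lie in $\Omega$: $S$ is simple, and $E$ is injective, hence satisfies C2 and therefore C3. Assuming $(3)$, let $\alpha\colon C\to M$ be a C3-cover; applying the factorization property of the cover to the inclusions $\iota_1\colon E\to M$ and $\iota_2\colon S\to M$ yields $\beta_1,\beta_2$ with $\alpha\beta_1=\iota_1$ and $\alpha\beta_2=\iota_2$, so the map $\beta\colon M\to C$ defined by $\beta(e,s)=\beta_1(e)+\beta_2(s)$ satisfies $\alpha\beta=\mathrm{id}_M$. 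Thus $M$ is isomorphic to a direct summand of $C$ and is therefore a C3-module. Assuming $(2)$ instead, a C3-envelope $g\colon M\to N$ together with the factorizations of the projections $\pi_1\colon M\to E$ and $\pi_2\colon M\to S$ through $g$ produces $h_1,h_2$ with $h_ig=\pi_i$, whence $\psi\colon N\to M$, $\psi(n)=(h_1(n),h_2(n))$, satisfies $\psi g=\mathrm{id}_M$; again $M$ is a direct summand of the C3-module $N$, hence C3.

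It remains to show that $M=E\oplus S$ being a C3-module forces $S=E$, i.e.\ that $S$ is injective; this is the step I expect to be the main obstacle and the heart of the argument. Writing elements of $M$ as pairs $(e,s)$ and applying the first Lemma with $X=S$, $Y=E$, and $f=\iota\colon S\hookrightarrow E$, I obtain the direct summand $\langle\iota\rangle=\{(s,s):s\in S\}$ (with complement $E$) together with $\langle\iota\rangle\cap(0\oplus S)=\Ker(\iota)=0$. Since $0\oplus S$ is also a summand, the C3 property makes $\langle\iota\rangle\oplus(0\oplus S)=S\oplus S$ a direct summand of $M$, where the first copy of $S$ is the socle $\Soc(E)\subseteq E$. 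Projecting onto the first coordinate then shows that this copy of $S$ is a direct summand of $E$; being essential in $E$, it must coincide with $E$, so $S=E(S)$ is injective. As $S$ was an arbitrary simple module, $R$ is a right V-ring.
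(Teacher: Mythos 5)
Your proof is correct and takes essentially the same route as the paper: the same test module $S\oplus E(S)$, the same splitting of the C3-envelope (resp.\ C3-cover) via the factorization properties through the two summands, and the same conclusion that $S\oplus E(S)$ inherits C3 as a direct summand, forcing $S$ to split off from $E(S)$ and hence be injective. The only difference is that you fill in details the paper leaves implicit, namely the semisimplicity of finitely cogenerated modules over a right V-ring for $(1)\Rightarrow(2),(3)$ and the graph-submodule argument (via the paper's first Lemma) showing that the C3 property of $S\oplus E(S)$ makes $\Soc(E(S))$ a direct summand of $E(S)$.
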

\begin{proof} $(1)\Rightarrow (2), (3)$ are obvious.

$(2)\Rightarrow (1)$  Let $N$ be an arbitrary simple module. Assume that   $\iota: M= N \oplus E(N)\rightarrow    E$ is the C3-envelope, where   $E$ is a C3-module.   Since  $N$ and $E(N)$ are C3-modules, there exist $f_1: E\rightarrow N,\ f_2: E\rightarrow E(N)$ such that $f_i\iota=\pi_i$, where  $\pi_1: M\rightarrow N_i$ and $\pi_2: M\rightarrow E(N)$ are   the projections. There exists $\phi: E\rightarrow M$ such that $\pi_i\phi=f_i$ for all $i=1,2$. It follows that $\phi\iota=id_M$, and so  the monomorphism $\iota$  splits. Thus  $N \oplus E(N)$ is isomorphic to a direct summand of $E$. It follows that   $N \oplus E(N)$ is also a C3-module.  Therefore  $N$ is a direct summand of $E(N)$. This gives  $N$ is injective. Thus $R$ is a right V-ring.

$(3)\Rightarrow (1)$ The proof is similar to the proof $(3)\Rightarrow (1)$ of Proposition \ref{sed3}.
\end{proof}
Similarly, we also get the following result for injectivity of semisimple modules:
\begin{prop}The following conditions are equivalent for a ring $R$:
\begin{enumerate}
\item $R$ is a right Noetherian right V-ring .
\item Every  right $R$-module with essential socle  has a C3-envelope.
\item Every  right $R$-module with essential socle  has a C3-cover.
\end{enumerate}
\end{prop}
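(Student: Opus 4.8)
The plan is to reduce everything to the purely ring-theoretic equivalence
\[
R \text{ is a right Noetherian right V-ring} \iff \text{every semisimple right } R\text{-module is injective},
\]
and then to run the envelope/cover machinery exactly as in Propositions \ref{sed3} and \ref{pro:2.8}. First I would record this ring-theoretic fact. The direction ``$\Rightarrow$'' is immediate: over a right V-ring every simple module is injective, and over a right Noetherian ring arbitrary direct sums of injectives are injective (Bass--Papp), so every semisimple module, being a direct sum of simples, is injective. For ``$\Leftarrow$'' the V-property is free (simples are semisimple), and I would obtain the Noetherian condition by a Bass-type argument: given a strictly ascending chain $I_1\subsetneq I_2\subsetneq\cdots$ with union $I$, use the V-property to choose for each $n$ a maximal right ideal $M_n\supseteq I_n$ with $I_{n+1}\not\subseteq M_n$, form the semisimple (hence injective) module $\bigoplus_n R/M_n$, and extend the homomorphism $I\to\bigoplus_n R/M_n$, $x\mapsto (x+M_n)_n$ (which has finite support) to $R$; evaluating the extension at $1$ forces the chain to stabilize, a contradiction.

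For $(1)\Rightarrow(2)$ and $(1)\Rightarrow(3)$ the hypothesis trivializes the problem. If $M$ has essential socle, then $\Soc(M)$ is semisimple, hence injective by the fact above, and an injective essential submodule must equal the whole module, so $M=\Soc(M)$ is semisimple and in particular a C3-module. Since the class $\Omega$ of C3-modules is closed under isomorphisms and direct summands, for any $M\in\Omega$ the identity $\mathrm{id}_M$ is simultaneously a C3-envelope and a C3-cover; this yields (2) and (3).

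For the converses $(2)\Rightarrow(1)$ and $(3)\Rightarrow(1)$ I would show directly that \emph{every} semisimple module is injective and then invoke the ring-theoretic fact. Fix a semisimple module $N$ and put $M=N\oplus E(N)$. Then $\Soc(M)=N\oplus\Soc(E(N))$ is essential in $M$ (because $N\leq_e E(N)$ forces $\Soc(E(N))\leq_e E(N)$), so $M$ has essential socle and therefore admits a C3-envelope $\iota\colon M\to E$ (resp.\ a C3-cover $\alpha\colon C\to M$). Since both $N$ (semisimple) and $E(N)$ (injective) are C3-modules, the defining property of the envelope supplies $f_1\colon E\to N$, $f_2\colon E\to E(N)$ with $f_i\iota=\pi_i$, whence $\iota$ splits and $M$ is a direct summand of the C3-module $E$; thus $M$ is itself C3 (the cover case produces $\beta_i$ with $\alpha\beta_i=\iota_i$ and $\alpha$ splits, exactly as in $(3)\Rightarrow(1)$ of Proposition \ref{sed3}). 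Now I would apply the graph construction of the opening lemma: writing $g\colon N\hookrightarrow E(N)$ for the essential inclusion, $\langle g\rangle$ is a direct summand of $M$, it meets $N\oplus 0$ trivially, and the C3-property makes $(N\oplus 0)\oplus\langle g\rangle=N\oplus g(N)$ a direct summand of $M$; factoring out $N\oplus 0$ shows $g(N)$ is a direct summand of $E(N)$, and since $g(N)\leq_e E(N)$ we conclude $N=E(N)$, i.e.\ $N$ is injective.

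The main obstacle is the Noetherian half of the ring-theoretic equivalence: the V-property by itself does not force arbitrary direct sums of injectives to be injective, so the Bass-type extension argument above---which crucially uses the V-ring hypothesis to replace the injective hulls $E(R/I_n)$ by the simple quotients $R/M_n$, keeping the target semisimple and hence injective---is what actually delivers the ascending chain condition. The remaining points are routine: verifying that $M=N\oplus E(N)$ has essential socle, that the C3 property passes to direct summands, and that the graph/diagonal computation identifies $N\oplus g(N)$ as the relevant summand of $M$.
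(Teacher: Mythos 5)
Your proof is correct and takes essentially the route the paper intends: the paper omits this proof entirely (saying only that it is obtained ``similarly'' to Proposition \ref{pro:2.8}), and your argument is precisely that adaptation, replacing ``simple'' by ``semisimple'' and ``finitely cogenerated'' by ``essential socle'', splitting the C3-envelope (resp.\ cover) of $M=N\oplus E(N)$, and using the graph-submodule $\langle g\rangle$ together with C3 to force $g(N)$ to be an essential direct summand of $E(N)$, hence $N$ injective. The one ingredient the paper leaves implicit --- that $R$ is right Noetherian and right V if and only if every semisimple right $R$-module is injective --- you supply correctly, including the Bass-type ascending-chain argument for the Noetherian half.
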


\section{SIP-CS modules}

A module $M$ is called \textit{relatively CS-Rickart to $N$} (or \emph{$N$-CS-Rickart}) if for every $\varphi\in \mathrm{End}_R(M, N)$, $\mathrm{Ker}\varphi$ is an essential submodule of a direct summand of $M$. A module $M$ is called \textit{relatively d-CS-Rickart to $N$} (or \emph{$N$-d-CS-Rickart}) if for every $\varphi\in \mathrm{End}_R(N, M)$, $\mathrm{Im}\varphi$ lies above a direct summand of $M$.  A module $M$ is called {\it CS-Rickart (d-CS-Rickart)} if $M$ is $M$-CS-Rickart (resp., $M$-d-CS-Rickart). $M$ is called a \textit{SIP-CS module} if
$A_i$ is essential in a direct summand of $M$ for all $i\in \mathcal{I}$, $\mathcal{I}$ is a finite index set, then $\bigcap_{i\in\mathcal{I}}A_i$ is essential in a direct summand of $M$ (see \cite{s2}). $M$ is called a  \textit{SSP-lifting  module} if
$A_i$ lies above a direct summand of $M$ for all $i\in \mathcal{I}$, $\mathcal{I}$ is a finite index set, then $\sum_{i\in\mathcal{I}}A_i$ lies above a direct summand of $M$ (see \cite{s3}). The class of CS-Rickart (d-CS-Rickart, SIP-CS, SSP-lifting) modules is studied by the authors in \cite{AT14, AB}.

\begin{lemma}\label{bd3:01} The following implications hold for a module $M=M_1\oplus \ldots \oplus M_n$:
 \begin{enumerate}
\item if $M$ is relatively CS-Rickart to $N$ then $M_i$ relatively CS-Rickart to $N$.
\item if $M$ is relatively d-CS-Rickart to $N$ then $M_i$ relatively d-CS-Rickart to $N$.
\end{enumerate}
\end{lemma}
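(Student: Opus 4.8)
The plan is to reduce everything to the two-term decomposition $M = M_i \oplus K$ with $K = \bigoplus_{j\neq i} M_j$, treat a single index $i$, and move the summand produced by the hypothesis on $M$ down into $M_i$. For part (1), given $\varphi \in \Hom_R(M_i, N)$ I would extend it to $\tilde\varphi = \varphi\pi_i \in \Hom_R(M,N)$, where $\pi_i : M \to M_i$ is the projection with kernel $K$. For part (2), given $\varphi \in \Hom_R(N, M_i)$ I would compose with the inclusion $\iota_i : M_i \to M$ to get $\tilde\varphi = \iota_i\varphi \in \Hom_R(N,M)$. In each case I apply the hypothesis to $\tilde\varphi$ and then descend.

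For (1), one checks directly that $\Ker\tilde\varphi = \Ker\varphi \oplus K$, so by hypothesis this kernel is essential in a direct summand $D$ of $M$. Since $K \leq D$ and $M = M_i \oplus K$, the modular law gives $D = (D\cap M_i)\oplus K$. I would then verify that $D\cap M_i$ is a direct summand of $M_i$: writing $M = D\oplus D''$ and passing to $M/K \cong M_i$ shows $D/K$ is a direct summand of $M/K$, and it corresponds to $D\cap M_i$. Finally I transfer essentiality downward: for $0\neq x\in D\cap M_i$, essentiality of $\Ker\varphi\oplus K$ in $D$ gives a nonzero $xr\in \Ker\varphi\oplus K$, and since $xr\in M_i$ while $M_i\cap K=0$ we get $xr\in \Ker\varphi$. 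Hence $\Ker\varphi \leq_e D\cap M_i$, so $M_i$ is $N$-CS-Rickart.

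For (2), $\im\tilde\varphi = \im\varphi \leq M_i$, and the hypothesis (in its equivalent form) provides a decomposition $M = D\oplus D'$ with $D\leq \im\varphi$ and $\im\varphi\cap D'$ small in $D'$. As $D\leq M_i$, the modular law yields $M_i = D\oplus (M_i\cap D')$, so $D$ is a direct summand of $M_i$, and $\im\varphi\cap(M_i\cap D') = \im\varphi\cap D'$. It then remains to see that this intersection is small in $M_i\cap D'$. The key step is that $M_i\cap D'$ is itself a direct summand of $D'$: from $M = D\oplus(M_i\cap D')\oplus K$, the projection $p\colon M\to D'$ along $D$ gives $D' = p(M_i\cap D')\oplus p(K) = (M_i\cap D')\oplus p(K)$. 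Once that is in hand, a submodule small in $D'$ and lying in the summand $M_i\cap D'$ is automatically small in $M_i\cap D'$, giving that $\im\varphi$ lies above the direct summand $D$ of $M_i$.

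The one genuinely non-formal point—and the step I expect to be the main obstacle—is the downward transfer of the defining property. For essentiality in (1) this is a short argument once $D\cap M_i$ is recognized as a summand, but smallness does not descend to arbitrary submodules: a submodule small in $D'$ need not be small in a smaller submodule containing it. What rescues (2) is precisely that $M_i\cap D'$ is a direct summand of $D'$, obtained from the modular-law decomposition of $M$; this is the crux on which the argument turns, and the rest is routine bookkeeping with $\pi_i$, $\iota_i$, and the modular law.
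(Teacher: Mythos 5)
Your proof is correct, but only part (1) follows the paper's route; part (2) is genuinely different, and the difference is worth spelling out. For (1) you do exactly what the paper does: extend $\varphi:M_i\to N$ by zero on $K=\bigoplus_{j\ne i}M_j$, get $\Ker(\varphi\pi_i)=\Ker\varphi\oplus K\leq_e D$, use $K\leq D$ and the modular law to write $D=(D\cap M_i)\oplus K$, observe that $D\cap M_i$ is a direct summand of $M_i$, and descend essentiality; this is the paper's argument almost verbatim (the paper phrases the last step as $\Ker\psi\oplus M_2\leq_e(D\cap M_1)\oplus M_2$ implying $\Ker\psi\leq_e D\cap M_1$). For (2), however, you work with the definition exactly as the paper states it in Section 3 ($\varphi\in\Hom_R(N,M)$, with $\im\varphi$ lying above a direct summand of $M$), and so you must transfer ``lies above a summand of $M$'' down to ``lies above a summand of $M_i$''. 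Your argument for this is correct, and the step you single out as the crux really is the crux: from $M=D\oplus D'$ with $D\leq\im\varphi\leq M_i$ and $\im\varphi\cap D'\ll D'$, the modular law gives $M_i=D\oplus(M_i\cap D')$, and your projection argument $D'=(M_i\cap D')\oplus p(K)$, obtained from $M=D\oplus(M_i\cap D')\oplus K$, is precisely what licenses the descent of smallness, since smallness passes to direct summands but not to arbitrary intermediate submodules. The paper's own proof of (2) does none of this: it takes $\psi:M_1\to N$, extends by zero to $\varphi=\psi\oplus 0|_{M_2}$, and concludes ``by assumption'' that $\im\varphi=\im\psi$ lies above a direct summand of $N$. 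That one-line proof implicitly uses the reversed convention (homomorphisms out of the module, image lying above a summand of the codomain $N$), which does not match the definition printed in Section 3 but does match how the notion is handled in the proof of Proposition \ref{pro:reCIP}(2). So the comparison is: the paper's proof of (2) is trivial but valid only under its unstated convention, while yours proves the statement under the definition as literally stated, at the cost of a real argument --- and en route you establish a reusable fact, namely that a submodule of a direct summand $M_i\subset_d M$ which lies above a direct summand of $M$ already lies above a direct summand of $M_i$.
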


\begin{proof}
We only need to prove for the case $n=2$, $i=1$.

(1) Assume that $M = M_1 \oplus M_2 $ is relatively CS-Rickart to $N$. There exists
$\varphi: M \to N$ such that $\varphi =\psi \oplus 0|_{M_2}$ for each $\psi: M_1 \to N$. By assumption, there exists  a direct summand $D$ of $M$ such that
$\mathrm{Ker}(\varphi)\leq_e D$. Since $\mathrm{Ker}(\varphi)=\mathrm{Ker}(\psi)\oplus M_2$ and $D = (D\cap M_1) \oplus
M_2$, it follows that $\mathrm{Ker} (\psi) \oplus M_2 \leq_e (D\cap M_1) \oplus M_2$.
Therefore $\mathrm{Ker}(\psi) \leq_e D \cap M_1 $. Since $D$ is a direct summand of $M$, $D \cap M_1$ is a direct summand of $M_1$. Hence $M_1$ is relatively CS-Rickart to $N$.

(2) Assume that $M = M_1\oplus M_2$  is relatively d-CS-Rickart to $N$. There exists
$\varphi: M \to N$ such that $\varphi = \psi \oplus 0 |_{M_2}$ for each $\psi: M_1 \to N$. By assumption, $\mathrm{Im}\varphi= \mathrm{Im}\psi$ lies above a direct summand of $N$. Thus, $M_1$ is relatively d-CS-Rickart to $N$.
\end{proof}

\begin{prop}\label{pro:reCIP} The following implications hold for a module $M$:
 \begin{enumerate}
\item if $M$ is a SIP-CS module with $C2$ condition and $M=M_1\oplus M_2$ then $M_1$  relatively CS-Rickart to $M_2$.
\item if $M$ is a SSP-lifting module with $D2$ condition and $M=M_1\oplus M_2$ then $M_1$  relatively d -CS-Rickart to $M_2$.
\end{enumerate}
\end{prop}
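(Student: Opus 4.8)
The plan is to treat the two parts as formal duals of one another: part (1) through \emph{intersections} of summands, the SIP-CS hypothesis, and $C2$; part (2) through \emph{sums} of summands, the SSP-lifting hypothesis, and $D2$. In both cases the device of Lemma 2.1 (the graph $\langle\varphi\rangle$ of a homomorphism) is what converts a kernel, resp.\ an image, into an intersection, resp.\ a sum, of two direct summands, so that the SIP-CS, resp.\ SSP-lifting, property can be applied.

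For (1), I would fix $\varphi\in\Hom_R(M_1,M_2)$ and set $T=\langle\varphi\rangle$. By Lemma 2.1 we have $T\subseteq_d M$ and $\Ker\varphi=M_1\cap T$; since $M_1$ and $T$ are summands, SIP-CS yields a summand $D\subseteq_d M$ with $\Ker\varphi\leq_e D$. First I would note $D\cap M_2=0$: any nonzero submodule of $D$ meets the essential submodule $\Ker\varphi\subseteq M_1$, whereas $D\cap M_2$ would meet it inside $M_1\cap M_2=0$. Hence the projection $\pi_1|_D$ (along $M_2$) is a monomorphism and $\pi_1(D)\cong D$. Now the $C2$ step: $\pi_1(D)$ is a submodule of $M$ isomorphic to the summand $D$, so $C2$ gives $\pi_1(D)\subseteq_d M$, and since $\pi_1(D)\subseteq M_1\subseteq_d M$ the modular law yields $\pi_1(D)\subseteq_d M_1$. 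Finally $\pi_1$ fixes $\Ker\varphi$ pointwise, so the isomorphism $D\cong\pi_1(D)$ carries $\Ker\varphi\leq_e D$ to $\Ker\varphi\leq_e\pi_1(D)$, proving $M_1$ is relatively CS-Rickart to $M_2$.

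For (2), I would fix $\varphi\in\Hom_R(M_2,M_1)$ and again put $T=\langle\varphi\rangle\subseteq_d M$. The dual identity is $M_2+T=M_2\oplus\im\varphi$; as $M_2$ and $T$ are summands (hence lie above themselves), SSP-lifting produces a summand $D\subseteq_d M$ with $M_2\oplus\im\varphi$ lying above $D$, i.e.\ $(M_2\oplus\im\varphi)/D\ll M/D$. Pushing smallness through $(M_2\oplus\im\varphi)+M_1=M$ gives $M_1+D=M$; applying $\pi_2$ then shows $\pi_2|_D\colon D\to M_2$ is onto, whence $M_2\subseteq D+\im\varphi$ and so $D+\im\varphi=M_2\oplus\im\varphi$. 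Using the isomorphism $M_1/(M_1\cap D)\cong M/D$ induced by $M_1+D=M$, I would transport the smallness to $\im\varphi/(M_1\cap D)\ll M_1/(M_1\cap D)$, so that $\im\varphi$ lies above $K:=M_1\cap D$ in $M_1$ — \emph{provided} $K\subseteq_d M_1$.

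The main obstacle in both parts is precisely this final transfer of a direct summand of $M$ down to a direct summand of $M_1$, and it is exactly where the $C2$, resp.\ $D2$, hypothesis is indispensable. In (1) it is clean, as just described. In (2) the naive candidate $\pi_1(D)$ need not equal $K=M_1\cap D$ and is not visibly a summand, so here the plan is to invoke $D2$ via the surjection $h=\pi_2\rho_D\colon M\to M_2$, where $\rho_D\colon M\to D$ is the projection along a complement $C$ of $D$ in $M=D\oplus C$. Since $\Ker h=(M_1\cap D)\oplus C$ and $M/\Ker h\cong M_2$ is a direct summand of $M$, the $D2$ condition forces $\Ker h\subseteq_d M$; splitting off the summand $C$ then gives $M_1\cap D\subseteq_d M$, and therefore $M_1\cap D\subseteq_d M_1$, completing the argument. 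I expect the verification that $\Ker h=(M_1\cap D)\oplus C$ with $M/\Ker h\cong M_2$ (so that $D2$ genuinely applies) to be the technical heart of (2), the surrounding smallness bookkeeping being routine.
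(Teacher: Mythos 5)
Your proposal is correct and follows essentially the same route as the paper's proof: the graph submodule $\langle\varphi\rangle$ converts the kernel (resp.\ image) into an intersection (resp.\ sum) of two direct summands, SIP-CS (resp.\ SSP-lifting) supplies the summand $D$, and $C2$ (resp.\ $D2$) is then used exactly as in the paper to turn the relevant submodule of $M_1$ into a direct summand before transferring essentiality (resp.\ smallness). The only differences are cosmetic: in part (2) the paper splits $eM=(eM\cap M_2)\oplus N$ and pushes smallness through the decomposition $M=N\oplus M_2$, whereas you prove $M_1\cap D\subset_d M$ directly by applying $D2$ to the composite $\pi_2\rho_D$ (in fact a more careful rendering of the paper's terse claim that the restricted projection ``splits by $D2$'') and transfer smallness through the isomorphism $M_1/(M_1\cap D)\cong M/D$.
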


\begin{proof} Let  $f : M_1\to M_2$ be an $R$-homomorphism.  Then $M=\langle f\rangle\oplus M_2$.

$(1)$.  We have that $\Ker(f)=\langle f\rangle\cap M_1\leq_e eM$ for some $e^2=e\in S$, by $M$ is SIP-CS.
Let $\pi_1 : M_1\oplus M_2\to M_1$ be the canonical projection and hence  $eM\cap M_2=0$, implies that $\pi_1(eM)\cong eM$. Since $M$ is a $C2$ module, $\pi_1(eM)$ is a direct summand of $M$. Then, since $\Ker(f)\leq_e eM$, $\Ker(f)=\pi_1(\Ker(f))\leq_e \pi_1(eM)$. Hence,  $M_1$  is relatively CS-Rickart to $M_2$.

$(2)$.  We have that $\im(f)\oplus M_1=\langle f\rangle +M_1$ lies above $eM$ for some $e^2=e\in S$, by $M$ is SSP-lifting. Since $\langle f\rangle+M_1+M_2=M$, $eM+M_2=M$ by \cite[3.2.(1)]{JC}.  Let $\pi_1 : M_1\oplus M_2\to M_1$ be the canonical projection. Then ${\pi_1}_{\mid eM}  $   splits by $D2$ condition. It follows that $eM=(eM\cap M_2)\oplus N$ by $\Ker({\pi_1}_{\mid eM})=eM\cap M_2$. We have  $N\oplus M_2= N+(eM\cap M_2+M_2)=eM+M_2=M$ and obtain that   $N\oplus \im(f)=M_1\oplus\im(f)\supset eM$.

By modular law, $eM=N\oplus eM\cap\im(f)$. As $\dfrac{\im(f)\oplus M_1}{eM}\ll \dfrac{M}{eM}$, we have that $\dfrac{N\oplus \im(f)}{N\oplus eM\cap\im(f)}\ll \dfrac{N\oplus M_2}{N\oplus eM\cap\im(f)}$. This is equivalent to $\dfrac{\im(f)}{eM\cap\im(f)}\ll \dfrac{M_2}{eM\cap\im(f)}$, which implies that $\im(f)$ lies above the direct summand $eM\cap\im(f)$ of $M$.
\end{proof}

\begin{cor}\label{hq3:3}The following implications hold for a module $M=M_1\oplus \ldots \oplus M_n$:
 \begin{enumerate}
\item if $M$ is a SIP-CS module with $C2$ condition then $M_i$ is   relatively CS-Rickart to $M_j$ for every $i\neq j.$
\item if $M$ is a SSP-lifting with $D2$ condition then $M_i$ is  relatively d-CS-Rickart to $M_j$ for every $i\neq j.$
\end{enumerate}
\end{cor}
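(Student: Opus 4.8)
The plan is to deduce both statements from Proposition \ref{pro:reCIP} by regrouping the decomposition into just two summands and then restricting the relevant relative property from a direct sum to one of its summands. Throughout I fix a pair of indices $i\neq j$ and set $N_i=\bigoplus_{k\neq i}M_k$, so that $M=M_i\oplus N_i$ is a two-term decomposition and $M_j\subset_d N_i$. Note that Lemma \ref{bd3:01} already records how these relative properties behave when the \emph{source} is decomposed; what the corollary needs instead is the companion fact about restricting the \emph{target}, so that is the only point requiring an argument beyond invoking Proposition \ref{pro:reCIP}.

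For (1), I would first apply Proposition \ref{pro:reCIP}(1) to the decomposition $M=M_i\oplus N_i$: since $M$ is SIP-CS with the $C2$ condition, $M_i$ is relatively CS-Rickart to $N_i$. It then remains to pass from the target $N_i$ to its summand $M_j$. Given any $\varphi\in\Hom_R(M_i,M_j)$, let $\iota:M_j\to N_i$ be the inclusion; then $\iota\varphi\in\Hom_R(M_i,N_i)$, and because $\iota$ is a monomorphism we have $\Ker(\iota\varphi)=\Ker(\varphi)$. By the relative CS-Rickart property to $N_i$, this kernel is essential in a direct summand of $M_i$, which is precisely the assertion that $M_i$ is relatively CS-Rickart to $M_j$.

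For (2), I would dualize. Proposition \ref{pro:reCIP}(2) applied to $M=M_i\oplus N_i$, using that $M$ is SSP-lifting with the $D2$ condition, gives that $M_i$ is relatively d-CS-Rickart to $N_i$. Now for any $\psi\in\Hom_R(M_j,M_i)$, compose with the canonical projection $\pi:N_i\to M_j$ afforded by $M_j\subset_d N_i$; since $\pi$ is an epimorphism, $\im(\psi\pi)=\im(\psi)$, and the relative d-CS-Rickart property to $N_i$ shows this image lies above a direct summand of $M_i$. Hence $M_i$ is relatively d-CS-Rickart to $M_j$.

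The essential content, and the only step needing a genuine (if very short) argument, is this reduction from the target $N_i$ to its summand $M_j$: relative CS-Rickartness is stable under replacing the target by a direct summand because precomposition is irrelevant and composing with the inclusion leaves the kernel unchanged, while dually relative d-CS-Rickartness is stable because composing with the projection leaves the image unchanged. I do not anticipate a real obstacle here; everything else is bookkeeping over the finite index set, and since $i\neq j$ were arbitrary the two conclusions hold for all such pairs.
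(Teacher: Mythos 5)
Your proof is correct, but it takes a genuinely different route from the paper's. The paper groups the decomposition around the \emph{target} index: it writes $M=\bigl(\bigoplus_{k\neq j}M_k\bigr)\oplus M_j$, applies Proposition \ref{pro:reCIP} to conclude that $\bigoplus_{k\neq j}M_k$ is relatively CS-Rickart (resp.\ d-CS-Rickart) to $M_j$, and then invokes Lemma \ref{bd3:01} to restrict the \emph{source} down to $M_i$; the whole proof is thus pure citation, with no new argument needed. You instead group around the \emph{source} index, writing $M=M_i\oplus N_i$ with $N_i=\bigoplus_{k\neq i}M_k$, get from Proposition \ref{pro:reCIP} that $M_i$ is relatively CS-Rickart (resp.\ d-CS-Rickart) to $N_i$, and then must restrict the \emph{target} from $N_i$ to its summand $M_j$ --- a step Lemma \ref{bd3:01} does not cover, as you correctly observe, and which you supply: composing $\varphi\colon M_i\to M_j$ with the inclusion $M_j\to N_i$ preserves kernels, and composing $\psi\colon M_j\to M_i$ with the projection $N_i\to M_j$ preserves images, so the relevant essential-in-summand (resp.\ lies-above-summand) conclusions transfer. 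Both arguments are valid; the paper's choice of decomposition is the economical one given its existing toolkit, while your route isolates a clean companion fact (stability of these relative properties under passing to direct summands of the target) that is even more elementary than Lemma \ref{bd3:01} and, combined with that lemma, yields the more general statement that relative (d-)CS-Rickartness passes to arbitrary pairs of summands of source and target.
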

\begin{proof}
If $M$ is a SIP-CS module with $C2$ condition (respectively, SSP-lifting with $D2$ condition), then by Proposition \ref{pro:reCIP},  $\bigoplus_{i\not=j} M_i$  is  relatively CS-Rickart to $M_j$ (respectively, relatively d-CS-Rickart to $M_j$). By Lemma \ref{bd3:01},  $M_i$  is  relatively CS-Rickart to $M_j$ (respectively, relatively d-CS-Rickart to $M_j$) for every $i\neq j$.
\end{proof}

\begin{cor}\label{hq3:4}The following implications hold for a module $M$:
 \begin{enumerate}
\item if $M\oplus M$ is a SIP-CS module with $C2$ condition then $M$ is a CS-Rickart module.
\item if $M\oplus M$  is a SSP-lifting with $D2$ condition then $M$ is a d-CS-Rickart module.
\end{enumerate}
\end{cor}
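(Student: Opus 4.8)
The plan is to specialize Corollary \ref{hq3:3} to the two-term decomposition of $M\oplus M$ into two copies of $M$, and then transport the resulting relative CS-Rickart (resp.\ d-CS-Rickart) property along the natural isomorphisms.

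First I would write $N=M\oplus M$ and fix the decomposition $N=M_1\oplus M_2$, where $M_1=M\oplus 0$ and $M_2=0\oplus M$, together with the canonical isomorphisms $\alpha:M\to M_1$ and $\beta:M\to M_2$. For part (1), since $N$ is a SIP-CS module with the $C2$ condition, Corollary \ref{hq3:3}(1) (applied with $n=2$) yields that $M_1$ is relatively CS-Rickart to $M_2$; that is, for every $\psi\in\Hom_R(M_1,M_2)$ the kernel $\Ker(\psi)$ is essential in a direct summand of $M_1$. For part (2), the identical argument using Corollary \ref{hq3:3}(2) gives that $M_1$ is relatively d-CS-Rickart to $M_2$.

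It remains to translate these statements back to $M$. Given any $\varphi\in\End_R(M)$, the map $\psi=\beta\varphi\alpha^{-1}:M_1\to M_2$ satisfies $\Ker(\psi)=\alpha(\Ker(\varphi))$ and $\im(\psi)=\beta(\im(\varphi))$; since $\alpha$ and $\beta$ are isomorphisms, they carry direct summands to direct summands and preserve essentiality, so $\Ker(\psi)$ being essential in a summand of $M_1$ forces $\Ker(\varphi)$ to be essential in a summand of $M$, and likewise for images. Hence in case (1) every $\varphi\in\End_R(M)$ has kernel essential in a direct summand, i.e.\ $M$ is $M$-CS-Rickart, which is exactly the assertion that $M$ is CS-Rickart; in case (2) every $\varphi\in\End_R(M)$ has image lying above a direct summand, so $M$ is d-CS-Rickart.

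The argument is essentially immediate once Corollary \ref{hq3:3} is available, so I do not anticipate any real obstacle; the only point requiring care is the routine verification that the property ``kernel (resp.\ image) essential in (resp.\ lying above) a direct summand'' is an isomorphism-invariant, which is precisely what lets the two copies of $M$ inside $M\oplus M$ stand in for $M$ itself when passing from the relative notion to the absolute one.
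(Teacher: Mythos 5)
Your proposal is correct and takes essentially the same approach as the paper, whose entire proof is the citation ``Follow from Corollary \ref{hq3:3}.'' Your write-up simply makes explicit the routine step the paper leaves implicit, namely transporting the relative CS-Rickart (resp.\ d-CS-Rickart) property along the canonical isomorphisms between $M$ and the two copies $M\oplus 0$, $0\oplus M$ inside $M\oplus M$.
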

\begin{proof}
Follow from Corollary \ref{hq3:3}.
\end{proof}

The singular submodule $Z(M)$ of a right $R$-module $M$ is defined as $Z(M)=\{m\in M: ann^{r}_R(m)$ is an essential right ideal of $R\}$ where $ann^r_R(m)$ denotes the right annihilator of $m$ in $R$.
The singular submodule of $R_R$ is called the (right) singular
ideal of the ring $R$ and is denoted by $Z(R_R)$. It is well known that $Z(R_R)$ is
indeed an ideal of $R$.

Next we give a necessary and sufficient condition for a ring over which every finitely generated projective module to be a SIP-CS - module which is also a $C2$ module.
\begin{theorem}\label{3.2} The following conditions are equivalent for a ring $R$:
\begin{itemize}
 \item[(1)] $R$ is a semiregular ring and $J(R) = Z(R_R)$;
 \item[(2)] Every finitely generated projective module is a CS-Rickart module which is also a $C2$ module.
 \item[(3)] Every finitely generated projective module is a SIP-CS module which is also a $C2$ module.
 \item[(4)] Every finitely generated projective module is a SIP-CS module which is also a $C3$ module.

\end{itemize}
\end{theorem}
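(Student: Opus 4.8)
The plan is to run the cycle $(1)\Rightarrow(2)\Rightarrow(3)\Rightarrow(4)\Rightarrow(1)$, where the two implications in the middle are soft and the real work sits in $(1)\Rightarrow(2)$ and $(4)\Rightarrow(1)$. For $(2)\Rightarrow(3)$ I would simply invoke the fact recorded in the introduction that every CS-Rickart module is SIP-CS (see [\ref{AB}, Proposition 1.(4)]), the $C2$ hypothesis carrying over verbatim. For $(3)\Rightarrow(4)$ I would use the standard implication $C2\Rightarrow C3$, so that ``SIP-CS and $C2$'' is formally stronger than ``SIP-CS and $C3$''. Thus no extra content is needed for these two steps.

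For $(1)\Rightarrow(2)$, fix a finitely generated projective $P$ and set $S=\End_R(P)$. I would first record the two structural facts that drive everything: (i) $S$ is again semiregular, and (ii) because $J(R)=Z(R_R)$, for finitely generated projectives one has $Z(P)=PJ(R)=\mathrm{rad}(P)$ (since $Z$ commutes with finite direct sums and passes to summands, $Z(P)=P\cap Z(R_R)^{n}=P\cap J(R)^{n}=\mathrm{rad}(P)$). Consequently $\bar S:=S/J(S)\cong \End_{\bar R}(\bar P)$, where $\bar R=R/J(R)=R/Z(R_R)$ is von Neumann regular and $\bar P=P/Z(P)$ is finitely generated projective and nonsingular; hence $\bar S$ is a regular ring, every $\bar f$ is a regular element, and $\Ker(\bar f)$ is a direct summand of $\bar P$. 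Given $f\in S$, lifting the idempotent associated to this summand to some $e=e^2\in S$ produces the candidate summand $(1-e)P$, and the point is that the defect between $\Ker(f)$ and this summand lands in $Z(P)=\mathrm{rad}(P)$. The fact that this defect is simultaneously singular and small is exactly what forces $\Ker(f)$ to be essential in a direct summand, giving CS-Rickart; the same reduction modulo $J(S)$ shows that a submodule of $P$ isomorphic to a summand splits off, giving $C2$.

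The hard direction is $(4)\Rightarrow(1)$, and here the difficulty is that I have only $C3$ at my disposal, so Corollary~\ref{hq3:4} and the argument of Proposition~\ref{pro:reCIP} — both of which use $C2$ to turn the isomorphic copy $\pi_1(eM)\cong eM$ into a summand — are not directly available. I would therefore argue at the level of the rings $R,R^2,\dots,R^n$ rather than re-deriving relative CS-Rickartness. Writing $\End(R_R)\cong R$ by left multiplication, the endomorphism $\lambda_a$ has $\Ker(\lambda_a)=ann^r_R(a)$ and $\im(\lambda_a)=aR$; applying the SIP-CS hypothesis to the graph $\langle \lambda_a\rangle\leq R^2$ shows $ann^r_R(a)$ is essential in a direct summand, while $C3$ is used to glue the relevant summands together. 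From this I would extract that $aR$ is, modulo the singular ideal, a direct summand — which yields von Neumann regularity of $R/Z(R_R)$ — and that idempotents lift modulo the radical via the summand/projectivity bookkeeping for $R^n$.

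The step I expect to be the main obstacle is proving $J(R)=Z(R_R)$. One inclusion amounts to showing that the essential defect appearing in every CS-structure is singular, so that $\mathrm{rad}$ is controlled by $Z(R_R)$; the reverse amounts to showing that $R/Z(R_R)$ inherits enough regularity to be nonsingular with vanishing radical, forcing $J(R)\subseteq Z(R_R)$. Identifying these two defects — the singular one coming from essentiality in SIP-CS and the small one coming from the radical — is precisely where the equality $J(R)=Z(R_R)$ must be manufactured, and it is the technical heart of the whole theorem. Once $(1)$ is obtained it is exactly the hypothesis fed into $(1)\Rightarrow(2)$, closing the cycle.
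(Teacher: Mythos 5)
Your cycle founders at $(4)\Rightarrow(1)$, and that is a genuine gap, not a detail to be filled in later. Everything you write for that implication is a description of what would have to be proved rather than a proof: you yourself call the equality $J(R)=Z(R_R)$ the ``main obstacle'' that ``must be manufactured,'' and the one concrete suggestion you make --- that $C3$ can be ``used to glue the relevant summands together'' --- is exactly the step that breaks. The mechanism of Proposition \ref{pro:reCIP}, which converts SIP-CS into relative CS-Rickartness, needs $C2$ precisely to split off the isomorphic copy $\pi_1(eM)\cong eM$; you correctly observe that this is unavailable under $C3$, but then offer nothing in its place. The paper never attempts this implication from scratch: it proves $(1)\Leftrightarrow(2)$ by citing \cite[Theorem 2]{AB}, gets $(2)\Rightarrow(3)$ from \cite[Proposition 1]{AB}, proves $(3)\Rightarrow(2)$ by the doubling trick (under hypothesis (3) the module $P\oplus P$ is itself finitely generated projective, hence SIP-CS and $C2$, so Proposition \ref{pro:reCIP} applied to $M=P\oplus P$ makes $P$ relatively CS-Rickart to $P$), and disposes of condition (4) by citing the equivalence in \cite[Corollary 3.5]{AT14}. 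The hard content you would need for $(4)\Rightarrow(1)$ is exactly what those cited results carry, and your proposal never supplies it.

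Two smaller points. Your soft steps are fine: $(2)\Rightarrow(3)$ via \cite[Proposition 1.(4)]{AB} is what the paper does, and $(3)\Rightarrow(4)$ via the standard implication $C2\Rightarrow C3$ is correct (the paper instead keeps (4) equivalent to the rest through \cite{AT14}, so it needs an implication out of (4), which is where the real difficulty lives --- your routing of the cycle puts all the weight on the one arrow nobody can do by hand here). As for $(1)\Rightarrow(2)$: your reduction modulo $J(S)$ with $Z(P)=\mathrm{rad}(P)$ is a plausible reconstruction of \cite[Theorem 2]{AB}, but the key assertion --- that the ``defect'' between $\Ker(f)$ and the lifted summand, being simultaneously singular and small, forces $\Ker(f)$ to be essential in a summand --- is stated, not proved. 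Since that direction is a citable known theorem this is forgivable; the missing $(4)\Rightarrow(1)$ is not, because without it condition (4) is not linked to the other three at all.
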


\begin{proof}

$(1)\Rightarrow (2)$. Follows from  \cite[Theorem 2]{AB}.

$(2)\Rightarrow (3)$. Follows from  \cite[Proposition 1]{AB}.

$(3)\Rightarrow (2)$. Let $P$ be a finitely generated projective module. By the hypothesis, $P$ is a SIP-CS module which is also a $C2$ module. Then $P\oplus P$ is a SIP-CS module which is also a $C2$ module. Since Proposition \ref{pro:reCIP}, $P$ is  relatively CS-Rickart to $P$,  it means that $P$ is a CS-Rickart module.

$(3)\Leftrightarrow (2)$. Follows from  \cite[Corollary 3.5]{AT14}.
\end{proof}

\begin{lemma}\label{lem:01} The following conditions are equivalent for a module $M$.
 \begin{enumerate}
\item $M$ is a SIP-CS module.
\item Intersection of every pair of direct summands of $M$ is essential in a direct summand
of M.
\end{enumerate}
\end{lemma}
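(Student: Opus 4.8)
The plan is to recognize that condition (2) is exactly the ``pair of direct summands'' instance of the defining property (1), so one direction is immediate, while the other is a reduction-to-pairs argument powered by a standard fact about essential submodules together with condition (2) applied to the summands.

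For $(1)\Rightarrow(2)$, I would simply observe that any direct summand $D$ of $M$ is essential in itself. Hence if $A_1$ and $A_2$ are direct summands of $M$, then each $A_i$ is essential in a direct summand (namely $A_i$ itself), and applying (1) to the two-element family $\{A_1,A_2\}$ yields that $A_1\cap A_2$ is essential in a direct summand of $M$. This is precisely condition (2).

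For $(2)\Rightarrow(1)$, the main step is the case of two submodules, after which induction on the size of the finite index set finishes the argument. So suppose $A_1\leq_e D_1$ and $A_2\leq_e D_2$ with $D_1,D_2$ direct summands of $M$. I would first invoke the elementary fact that if $A\leq_e B$ in $M$ then $A\cap C\leq_e B\cap C$ for every submodule $C$ (given $0\neq x\in B\cap C$, essentiality of $A$ in $B$ produces $r$ with $0\neq xr\in A$, and $xr\in C$ automatically, so $0\neq xr\in A\cap C$). Applying this twice and using transitivity of essentiality gives
\[
A_1\cap A_2\leq_e A_1\cap D_2\leq_e D_1\cap D_2,
\]
so $A_1\cap A_2\leq_e D_1\cap D_2$. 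Now condition (2) applies to the two direct summands $D_1,D_2$: there is a direct summand $D$ of $M$ with $D_1\cap D_2\leq_e D$. Transitivity of essentiality then gives $A_1\cap A_2\leq_e D$, proving the two-submodule case.

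Finally, I would run the induction: writing $B=\bigcap_{i=1}^{n-1}A_i$, the induction hypothesis puts $B$ essential in some direct summand $D$, and then the two-submodule case applied to $B\leq_e D$ and $A_n\leq_e D_n$ shows that $\bigcap_{i=1}^{n}A_i=B\cap A_n$ is essential in a direct summand of $M$. The only point requiring care---and the closest thing to an obstacle---is the bookkeeping of essentiality: one must chain the two applications of the $A\cap C\leq_e B\cap C$ lemma in the correct order and, crucially, apply condition (2) to the enveloping summands $D_1,D_2$ rather than to $A_1,A_2$ themselves.
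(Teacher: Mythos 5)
Your proof is correct. The paper itself disposes of this lemma with the single line ``It is obvious,'' and your argument---$(1)\Rightarrow(2)$ by noting each direct summand is essential in itself, and $(2)\Rightarrow(1)$ by the standard fact that $A\leq_e B$ implies $A\cap C\leq_e B\cap C$, combined with transitivity of essentiality, application of (2) to the enveloping summands $D_1, D_2$, and induction on the size of the finite family---is exactly the routine verification the authors left unwritten.
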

\begin{proof}It is obvious.
\end{proof}

\begin{prop}\label{pro:ker} Assume that  $M$ is a SIP-CS module. Then for any decomposition   $M=M_1\oplus M_2$  and   $f: M_1\to M_2$ is a homomorphism, then $\Ker(f)$ is essential  in  a direct summand of $M$.
\end{prop}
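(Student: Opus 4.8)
The plan is to realize $\Ker(f)$ as the intersection of two direct summands of $M$, so that the SIP-CS hypothesis applies directly. First I would form the graph $\langle f\rangle=\{x+f(x)\mid x\in M_1\}$ of the given homomorphism $f:M_1\to M_2$. Applying the first lemma of Section~2 with $X=M_1$, $Y=M_2$ and $A=M_1$: part~(1) yields $M=M_1\oplus M_2=\langle f\rangle\oplus M_2$, so that $\langle f\rangle$ is a direct summand of $M$; part~(2) yields $\Ker(f)=M_1\cap\langle f\rangle$.

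Next I would observe that $M_1$ and $\langle f\rangle$ are now both direct summands of $M$. Since $M$ is SIP-CS, Lemma~\ref{lem:01} guarantees that the intersection of any pair of direct summands of $M$ is essential in a direct summand of $M$. Applying this to the pair $M_1$ and $\langle f\rangle$, I conclude that $\Ker(f)=M_1\cap\langle f\rangle$ is essential in a direct summand of $M$, which is exactly the claim.

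There is no real obstacle here: once the graph construction is in place, the conclusion is a one-line invocation of the SIP-CS definition. The one genuine point — and the reason the graph trick is needed at all — is that a kernel is not, on its face, an intersection of direct summands; it is part~(2) of the graph lemma that rewrites $\Ker(f)$ precisely as $M_1\cap\langle f\rangle$ and thereby reduces the statement to the SIP-CS hypothesis. In writing this up I would only pause to confirm that the hypotheses of the graph lemma hold, namely that $f$ is defined on the summand $M_1$ with values in the complementary summand $M_2$, which is given; this ensures both conclusions of that lemma are available and the argument goes through verbatim.
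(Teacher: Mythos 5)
Your proposal is correct and is essentially identical to the paper's own proof: both form the graph $\langle f\rangle$, use the graph lemma to write $M=\langle f\rangle\oplus M_2$ and $\Ker(f)=M_1\cap\langle f\rangle$, and then invoke Lemma~\ref{lem:01} on the pair of direct summands $M_1$ and $\langle f\rangle$. Nothing to add.
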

\begin{proof} Assume that $M = M_1 \oplus  M_2$ and  $f : M_1\to M_2$ an $R$-homomorphism.
Call $T= \langle f\rangle$ a submodule of $M$. So $M=T\oplus M_2$ and $\Ker(f)=T\cap M_1$. On the other hand,  by the hypothesis, $M$ is a SIP-CS and hence   $\Ker(f)$ is essential in  a direct summand of $M$ by Lemma \ref{lem:01}.
\end{proof}
\begin{cor}\label{cor:kerd}Let $M$ be a  module and  $N$,  a nonsingular module. If $M\oplus N$ is a SIP-CS module, then every  homomorphism from $M$ to $N$  has the kernel a direct summand of $M$.
\end{cor}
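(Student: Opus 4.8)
The plan is to reduce everything to Proposition~\ref{pro:ker} and then exploit the hypothesis $Z(N)=0$. Applying Proposition~\ref{pro:ker} to the SIP-CS module $M\oplus N$, with the decomposition $M\oplus N=M\oplus N$ and the given map $f\colon M\to N$, produces a direct summand $D$ of $M\oplus N$ with $\Ker(f)\leq_e D$. The whole point is then to promote ``$\Ker(f)$ essential in a summand of $M\oplus N$'' to ``$\Ker(f)$ a summand of $M$'', and nonsingularity of $N$ is what makes this possible.

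I would rely on two standard facts: if $A\leq_e B$ then $B/A$ is singular, and every submodule of the nonsingular module $N$ is again nonsingular; consequently any module that is simultaneously singular and isomorphic to a submodule of $N$ must be zero. First I would show $D\leq M$. Restricting the projection $\pi_N\colon M\oplus N\to N$ to $D$ gives a map with kernel $D\cap M$; since $\Ker(f)\leq D\cap M\leq D$ and $\Ker(f)\leq_e D$, we get $D\cap M\leq_e D$, so $D/(D\cap M)\cong \pi_N(D)$ is both singular (an essential quotient) and a submodule of $N$. Hence $\pi_N(D)=0$ and $D\leq M$.

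With $D\leq M$ the same mechanism applied to $f|_D\colon D\to N$ finishes the argument: its kernel is $\Ker(f)\leq_e D$, so $D/\Ker(f)\cong \im(f|_D)\leq N$ is at once singular and nonsingular, forcing $\Ker(f)=D$. Since $D$ is a direct summand of $M\oplus N$ contained in $M$, writing $M\oplus N=D\oplus D'$ and applying the modular law gives $M=D\oplus(M\cap D')$, so $\Ker(f)=D$ is a direct summand of $M$, as required.

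The only genuine obstacle is the step $\Ker(f)\leq_e D\Rightarrow \Ker(f)=D$; the rest is formal bookkeeping with projections and the modular law. It is worth noting that nonsingularity is used twice---once to force $D$ inside $M$ and once to collapse the essential extension $\Ker(f)\leq_e D$---and this double use is precisely what converts the ``essentially-a-summand'' output of the SIP-CS property into a genuine direct summand.
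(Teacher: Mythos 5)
Your proof is correct and takes essentially the same route as the paper's: both apply Proposition~\ref{pro:ker} to place $\Ker(f)$ essentially inside a direct summand of $M\oplus N$, and then use the standard facts that a quotient by an essential submodule is singular while every submodule of $N$ is nonsingular to collapse the essential extension, finishing with the modular law. The only difference is bookkeeping: the paper runs the singular-versus-nonsingular argument once, on the composite $h=(f\circ \pi_M)|_A$, whereas you run it twice (first to force $D\leq M$, then to force $D=\Ker(f)$); the two arguments are otherwise identical.
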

\begin{proof} Let $f: M\to N$ be a non-zero homomorphism. By  Proposition \ref{pro:ker}, $\Ker(f)$ is essential in a direct summand of  $M\oplus N$. Assume that   $A$ is a direct summand of  $M\oplus N$ such that  $\Ker(f)\leq_e A$. Call  $\pi_M: M\oplus N\to M$  the canonical projection and $h=(f\circ \pi_M)|_A: A\to N$. Therefore   $\Ker(h)=\Ker(f)\oplus (N\cap A)$. We have that  $\Ker(f)\leq_e A$ and obtain that $\Ker(f)\oplus (N\cap A)\leq_e A$. It follows that $\Ker(f)\oplus (N\cap A)= A$. Thus  $\Ker(f)$ is a direct summand of $M$.
\end{proof}
\begin{cor}Let $M$ be an indecomposable module and  $N$ be a nonsingular module. If $M\oplus N$ is a SIP-CS module, then every nonzero homomorphism from $M$ to $N$ is a monomorphism.
\end{cor}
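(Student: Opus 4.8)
The plan is to deduce this statement directly from Corollary~\ref{cor:kerd}, which has already done all of the real work. First I would fix an arbitrary nonzero homomorphism $f \colon M \to N$. Since $N$ is nonsingular and $M \oplus N$ is a SIP-CS module, the hypotheses of Corollary~\ref{cor:kerd} are met verbatim, and that corollary tells me that $\Ker(f)$ is a direct summand of $M$.

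Next I would invoke the indecomposability of $M$. The only direct summands of an indecomposable module are $0$ and $M$ itself, so $\Ker(f) \in \{0, M\}$. Because $f$ is assumed nonzero, the alternative $\Ker(f) = M$ is impossible, as this would force $f = 0$; hence $\Ker(f) = 0$, which is precisely the assertion that $f$ is a monomorphism.

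There is essentially no obstacle to overcome here: the substantive content resides in Corollary~\ref{cor:kerd}, and the present statement is merely its specialization to a module $M$ with no nontrivial direct summands. The only point worth recording explicitly is the equivalence $\Ker(f) = M \iff f = 0$, which is what lets me discard the trivial-kernel alternative the moment nonzeroness of $f$ is assumed.
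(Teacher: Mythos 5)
Your proof is correct and is exactly the argument the paper intends (the paper leaves this corollary without a written proof precisely because it is this immediate specialization of Corollary~\ref{cor:kerd}): the kernel of $f$ is a direct summand of $M$, indecomposability forces $\Ker(f)=0$ or $\Ker(f)=M$, and nonzeroness of $f$ rules out the latter. Nothing is missing.
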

\begin{prop} Let $M$ be a nonsingular right $R$-module. If $(R\oplus M)_R$ is a  SIP-CS module, then every cyclic submodule of $M$ is projective.
\end{prop}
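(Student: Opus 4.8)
The plan is to realize each cyclic submodule of $M$ as the image of a homomorphism out of $R_R$ and then read off its projectivity directly from Corollary \ref{cor:kerd}. Fix $m\in M$ and define $f:R_R\to M$ by $f(r)=mr$. Since $f(rs)=m(rs)=(mr)s=f(r)s$, this is a homomorphism of right $R$-modules, and by construction $\im(f)=mR$ while $\Ker(f)=\operatorname{ann}^r_R(m)$.

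Next I would invoke Corollary \ref{cor:kerd} with its ``$M$'' taken to be $R$ and its nonsingular module ``$N$'' taken to be our module $M$. The hypotheses are exactly met: $(R\oplus M)_R$ is SIP-CS by assumption and $M$ is nonsingular. Hence every homomorphism from $R$ to $M$ has its kernel a direct summand of $R_R$; applying this to $f$ gives a decomposition $R=\Ker(f)\oplus K$ for some right ideal $K$. Then the first isomorphism theorem yields $mR=\im(f)\cong R/\Ker(f)\cong K$, and $K$, being a direct summand of the free module $R_R$, is projective. Therefore $mR$ is projective, and since $m$ was arbitrary, every cyclic submodule of $M$ is projective.

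I do not expect a genuine obstacle here, as the statement follows quickly once Corollary \ref{cor:kerd} is in hand. The only points requiring care are matching the roles in that corollary correctly (the nonsingular module must play the part of the codomain, which is why we send $R\to M$ rather than $M\to R$) and checking that $f$ is indeed right $R$-linear, which is immediate. One could alternatively bypass the corollary and argue directly via Proposition \ref{pro:ker}, showing that $\Ker(f)$ is essential in a direct summand $A$ of $R\oplus M$ and then using nonsingularity of $M$ to force $A\cap M=0$ and $\Ker(f)=A\cap R$ to split off $R$; but the route through Corollary \ref{cor:kerd} is the cleanest and is the one I would present.
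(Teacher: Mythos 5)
Your proposal is correct and matches the paper's own proof essentially verbatim: both define the map $x\mapsto mx$ from $R_R$ to $M$, apply Corollary \ref{cor:kerd} (with $R$ as the domain and the nonsingular $M$ as the codomain) to conclude that the kernel is a direct summand of $R_R$, and deduce that $mR$ is isomorphic to a direct summand of $R_R$, hence projective. The only differences are cosmetic elaborations (the explicit linearity check and the first isomorphism theorem), so there is nothing further to reconcile.
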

\begin{proof} Let $m$ be a non-zero arbitrary element of $M$. Call the homomorphism $\varphi: R_R\to M$ given by $\varphi(x)=mx$. As $(R\oplus M)_R$ is a  SIP-CS module,  $\Ker(\varphi)$  is a direct summand of $R_R$ by Corollary \ref{cor:kerd}.  It follows that $\im(\varphi)$ is isomorphic to a direct summand of $R_R$. Thus $mR$ is a projective module.
\end{proof}
A ring $R$ is called  right {\it (semi)hereditary} if every (finitely generated) right ideal of $R$   is projective.

\begin{theorem}\label{thm:heredi}  The following statements are equivalent for a right nonsingular  ring $R$.
\begin{enumerate}
\item  $R$   is  right hereditary.
\item  Every  projective  right $R$-module is  a SIP-CS module.
\end{enumerate}
\end{theorem}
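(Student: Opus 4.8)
The plan is to treat the two implications separately; almost all of the content sits in $(2)\Rightarrow(1)$, so I would dispatch the forward direction quickly. For $(1)\Rightarrow(2)$, I would show that over a right hereditary ring every projective module is in fact \emph{Rickart}, and then invoke the chain Rickart $\Rightarrow$ SIP $\Rightarrow$ SIP-CS. Concretely, let $P$ be projective and $\varphi\in\End_R(P)$. Since $R$ is right hereditary, the classical Cartan--Eilenberg theorem guarantees that the submodule $\im(\varphi)\leq P$ is again projective, so the canonical epimorphism $P\to\im(\varphi)$ splits and $\Ker(\varphi)$ is a direct summand of $P$. Thus $P$ is Rickart and therefore has the SIP, as recalled in the Introduction. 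Finally, any direct summand $D$ satisfies $D\leq_e D$, so the SIP forces the SIP-CS; hence $P$ is SIP-CS. Note that this direction uses neither nonsingularity nor the full machinery of Section~3.

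For $(2)\Rightarrow(1)$, since right hereditary is defined here by the projectivity of every right ideal, it suffices to prove that an arbitrary right ideal $I\leq R_R$ is projective, and I would exploit nonsingularity precisely at this point. Because $Z(R_R)=0$, the module $R_R$ is nonsingular. Choose a free module $F$ together with an epimorphism $p:F\to I$, and pass to the composite $q:F\to I\hookrightarrow R$, a homomorphism from the projective module $F$ into the nonsingular module $R_R$ with $\Ker(q)=\Ker(p)$. The direct sum $F\oplus R$ is free, hence projective, hence SIP-CS by hypothesis $(2)$, so Corollary \ref{cor:kerd} applies to $q$ and shows that $\Ker(p)$ is a direct summand of $F$. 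Writing $F=\Ker(p)\oplus C$, the restriction $q|_C$ is an isomorphism $C\cong I$, so $I$ is isomorphic to a direct summand of the free module $F$ and is therefore projective. As $I$ was arbitrary, $R$ is right hereditary.

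The main obstacle is the $(2)\Rightarrow(1)$ direction, and the one delicate move is to realize the ideal as the kernel of a map whose \emph{target} is nonsingular, so that Corollary \ref{cor:kerd} becomes applicable: the naive presentation $I=\Ker(R\to R/I)$ fails, since $R/I$ need not be nonsingular, whereas routing through the inclusion $I\hookrightarrow R$ keeps the target equal to the nonsingular module $R_R$ while leaving the kernel unchanged. Once this presentation is in hand the splitting of $\Ker(p)$ is immediate from Corollary \ref{cor:kerd}, and no further work (in particular, no transfinite Kaplansky-type dévissage passing from ideals to arbitrary submodules of free modules) is required, because ``every right ideal is projective'' is exactly the working definition of right hereditary adopted just before the statement.
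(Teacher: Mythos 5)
Your proposal is correct and takes essentially the same route as the paper: in $(2)\Rightarrow(1)$, the paper likewise presents a right ideal $I$ as the image of an epimorphism $\varphi\colon F\to I$ from a free module, composes with the inclusion $\iota\colon I\hookrightarrow R_R$ so that the target is the nonsingular module $R_R$, and applies Corollary \ref{cor:kerd} to the SIP-CS module $F\oplus R_R$ to split off $\Ker(\varphi)$, concluding $I\cong B$ is projective. The only difference is cosmetic: the paper declares $(1)\Rightarrow(2)$ obvious, while you flesh it out via the chain hereditary $\Rightarrow$ projectives are Rickart $\Rightarrow$ SIP $\Rightarrow$ SIP-CS, which is a valid justification of that step.
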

\begin{proof} $(1)\Rightarrow (2)$ is obvious.

$(2)\Rightarrow (1)$ Let $I$ be a  right ideal  of $R$. We will show that $I$ is a projective module. Call an epimorphism $\varphi: F\to  N$ for some free right $R$-module $F$.    Let  $\iota$  be the inclusion map from $I$ to $R_R$. Consider the homomorphism  $\iota\circ \varphi: F\to R_R$. By
(2),  $F\oplus  R_R$  is a  SIP-CS module. We have from  Corollary \ref{cor:kerd}, $\Ker(\varphi)=\Ker(\iota\circ \varphi)$  is  a direct summand $F$. This gives that  $F=\Ker(\varphi)\oplus B$ for some submodule $B$ of $F$.   Thus, $I$ is projective.
\end{proof}

The author Warfied proved that if $R$ is right  serial, then $R$ is right  nonsingular if and only if $R$ is right  semihereditary.

The same argument of the proof of Theorem \ref{thm:heredi}, we also have the following result of semihereditary rings:

\begin{theorem}\label{thm:semiheredi}  The following statements are equivalent for a right nonsingular  ring $R$.
\begin{enumerate}
\item  $R$   is  right semihereditary.
\item  Every  finitely generated projective right  $R$-module is  a SIP-CS module.
\item  Every  finitely generated free  right $R$-module is  a SIP-CS module.
\end{enumerate}
\end{theorem}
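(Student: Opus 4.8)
The plan is to follow the proof of Theorem \ref{thm:heredi} almost verbatim, establishing the cycle $(1)\Rightarrow(2)\Rightarrow(3)\Rightarrow(1)$, but carefully tracking finiteness so that condition (3), which only furnishes the SIP-CS property for \emph{finitely generated} free modules, can be invoked. The implication $(2)\Rightarrow(3)$ is immediate, since a finitely generated free module is finitely generated projective.

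For $(1)\Rightarrow(2)$ I would argue that semihereditariness forces every finitely generated projective module $P$ to be even SIP, which is stronger than SIP-CS (a direct summand is essential in itself). Indeed, let $A=eP$ and $B=fP$ be direct summands, with $e,f\in\End(P)$ idempotents, and set $\phi=(1-f)|_A\colon A\to P$. A direct check gives $\Ker(\phi)=A\cap B$, while $\im(\phi)$ is a finitely generated submodule of the projective module $P$, hence projective by the semihereditary hypothesis. Thus the short exact sequence $0\to A\cap B\to A\to \im(\phi)\to 0$ splits, so $A\cap B$ is a direct summand of $A$ and therefore of $P$. Hence $P$ is SIP, and in particular SIP-CS. (This is the precise analogue of the ``obvious'' implication $(1)\Rightarrow(2)$ of Theorem \ref{thm:heredi}, where full hereditariness lets the same argument run for arbitrary, not necessarily finitely generated, projectives.)

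The substantive direction is $(3)\Rightarrow(1)$, which mirrors $(2)\Rightarrow(1)$ of Theorem \ref{thm:heredi}. Given a finitely generated right ideal $I$, I would choose an epimorphism $\varphi\colon F\to I$ with $F$ a finitely generated free module and let $\iota\colon I\to R_R$ denote the inclusion. The key point is that $F\oplus R_R$ is again finitely generated free, so (3) makes it a SIP-CS module; moreover $R_R$ is nonsingular because $R$ is right nonsingular. Applying Corollary \ref{cor:kerd} with $M=F$ and $N=R_R$ to the homomorphism $\iota\circ\varphi\colon F\to R_R$, its kernel is a direct summand of $F$. Since $\iota$ is a monomorphism, $\Ker(\iota\circ\varphi)=\Ker(\varphi)$, so $F=\Ker(\varphi)\oplus B$ and $I\cong F/\Ker(\varphi)\cong B$ is a direct summand of a free module, hence projective. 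As $I$ was an arbitrary finitely generated right ideal, $R$ is right semihereditary.

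The main obstacle, and the only genuine deviation from the hereditary case, is the bookkeeping of finiteness: one must ensure $I$ is finitely generated so that $F$, and hence $F\oplus R_R$, is finitely generated free and condition (3) actually applies. The nonsingularity hypothesis is equally essential, since it is exactly what makes $R_R$ a nonsingular module, the running assumption in Corollary \ref{cor:kerd}; without it the kernel of $\iota\circ\varphi$ need only be essential in a direct summand rather than an honest direct summand, and the projectivity of $I$ would break down.
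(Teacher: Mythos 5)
Your proof is correct and follows essentially the same route as the paper, which proves this theorem by repeating the argument of Theorem \ref{thm:heredi}: the substantive direction $(3)\Rightarrow(1)$ is exactly the paper's argument (epimorphism from a free module onto the ideal, compose with the inclusion into $R_R$, apply Corollary \ref{cor:kerd}), with the finiteness bookkeeping handled correctly. Your $(1)\Rightarrow(2)$ merely fills in what the paper dismisses as obvious, using the standard fact that finitely generated submodules of projective modules over a right semihereditary ring are projective.
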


Let $M$ be a right $R$-module and  $S=\End(M)$. We denote $$\Delta(S)=\{f\in S\ |\ \Ker(f)\leq_e M\}.$$

An $R$-module is called a {\it self-generator}  if it generates all its submodules.

\begin{theorem}\label{csn} The following conditions are equivalent for a self-generator module  $M$ with $S=\End(M)$:
\begin{enumerate}
\item $S$ is a semiregular ring with $J(S)=\Delta(S)$.
\item $M$ is a CS-Rickart and C2 module.
\end{enumerate}
\end{theorem}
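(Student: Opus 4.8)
The plan is to treat the ideal $\Delta=\Delta(S)$ as the bridge between the module-theoretic side and the ring-theoretic side, and to prove the two implications separately. I would first record that $\Delta$ is always a two-sided ideal of $S$: it is additively closed because $\Ker(f)\cap\Ker(g)\leq_e M$ for $f,g\in\Delta$, it is a left ideal because $\Ker(sf)\supseteq\Ker(f)$, and it is a right ideal because $\Ker(fs)=s^{-1}(\Ker f)$ remains essential (pull an essential kernel back along $s$). This holds for any module and is the structural fact that makes the quotient $S/\Delta$ meaningful.

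For $(2)\Rightarrow(1)$ I would argue in three steps. \emph{Step one, $\Delta\subseteq J(S)$:} for $h\in\Delta$ the endomorphism $1-h$ is injective (if $x=h(x)\neq0$ then $xR\cap\Ker h\neq0$ produces a nonzero element that is both fixed and killed, a contradiction), while $\Ker h\subseteq\im(1-h)$ forces $\im(1-h)\leq_e M$; since $\im(1-h)\cong M$ is isomorphic to a summand, the $C2$ condition makes it a summand, and an essential summand is all of $M$, so $1-h$ is an automorphism. As $\Delta$ is an ideal, $1-gh$ is a unit for every $g$, whence $h\in J(S)$. \emph{Step two, $S/\Delta$ is von Neumann regular:} given $f\in S$, CS-Rickart yields an idempotent $e$ with $\Ker f\leq_e eM$, so $f$ is injective on $(1-e)M$ and, by $C2$, $f((1-e)M)$ is a summand; taking $g$ to invert $f|_{(1-e)M}$ on $f((1-e)M)$ and to be $0$ on a complement, one checks that $(1-e)M\oplus\Ker f\subseteq\Ker(f-fgf)$ is essential in $M$, so $f-fgf\in\Delta$. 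Regular rings are semiprimitive, so $J(S/\Delta)=0$, which with Step one gives $J(S)=\Delta$ and $S/J(S)$ regular. \emph{Step three, idempotents lift modulo $\Delta$:} if $u-u^2\in\Delta$ then $\Ker(u^2-u)=\Ker u\oplus\Ker(u-1)\leq_e M$; CS-Rickart gives summands $A,B$ with $\Ker u\leq_e A$, $\Ker(u-1)\leq_e B$ and $A\cap B=0$, so $A\oplus B$ is a summand by $C3$ (which follows from $C2$) and, being essential, equals $M$; the projection $e$ onto $B$ along $A$ then satisfies $u-e\in\Delta$. The three steps together say $S$ is semiregular with $J(S)=\Delta(S)$.

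For $(1)\Rightarrow(2)$ I would push the semiregularity back into the module via the classical characterizations of semiregular rings. For CS-Rickart, for each $f\in S$ choose an idempotent $e\in Sf$ with $f-fe\in J(S)=\Delta$; then $\Ker f\subseteq(1-e)M$, and from $f(1-e)\in\Delta$ a short essentiality computation (for $0\neq y\in(1-e)M$ pick $r$ with $0\neq yr$ and $f(yr)=0$) gives $\Ker f\leq_e(1-e)M$, a summand. For $C2$, represent a submodule $N\cong e_0M$ by $g\in S$ with $\im g=N$, $\Ker g=(1-e_0)M$ and $g|_{e_0M}$ an isomorphism onto $N$; the right-hand characterization supplies an idempotent $e\in gS$ with $(1-e)g\in\Delta$, so $eM\subseteq N$ and $g^{-1}(eM)=(1-e_0)M\oplus K\leq_e M$ forces $K\leq_e e_0M$, hence $eM=g(K)\leq_e g(e_0M)=N$; writing $N=eM\oplus(N\cap(1-e)M)$ then collapses the second summand, so $N=eM$ is a summand.

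The step I expect to be the main obstacle is the regularity of $S/\Delta$ in $(2)\Rightarrow(1)$: CS-Rickart only makes $\Ker f$ \emph{essential} in a summand, and the delicate part is converting this into an exact von Neumann inverse modulo $\Delta$. This is precisely where $C2$ is indispensable, since it is what promotes the injective image $f((1-e)M)$ to a genuine direct summand and thereby lets the quasi-inverse $g$ be defined. In the reverse direction the sensitive point is extracting idempotents lying \emph{exactly} in $Sf$ and in $gS$; here the self-generator hypothesis is what keeps the correspondence between one-sided ideals of $S$ and submodules of $M$ faithful (in particular it guarantees the representing homomorphisms and the retractability needed to detect essential submodules), while the residual essentiality checks are routine.
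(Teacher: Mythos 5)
Your proof is correct, and it takes a genuinely different, more self-contained route than the paper's. For $(1)\Rightarrow(2)$ your CS-Rickart half is essentially the paper's argument (Nicholson's form of semiregularity gives $\beta$ with $\beta=\beta f\beta$ and $f-f\beta f\in J(S)=\Delta(S)$, and the kernel is then essential in the complementary summand), but for the C2 half the paper goes a different way: it uses the self-generator hypothesis to get $J(S)=\Delta(S)\subseteq Z(S_S)$, deduces that $S$ is a right C2 \emph{ring}, and then invokes \cite[Theorem 7.14(1)]{NY1} to transfer this to the module $M$; your direct argument --- realizing $N\cong e_0M$ as $\im(g)$, extracting the idempotent $e=g\beta\in gS$ with $(1-e)g\in\Delta(S)$, and squeezing $eM\leq_e N$ until $N=eM$ --- replaces that machinery. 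For $(2)\Rightarrow(1)$ the paper gives no argument at all, citing \cite[Theorem 3.2]{QKT}, whereas your three steps ($\Delta(S)\subseteq J(S)$ via C2, regularity of $S/\Delta(S)$ via CS-Rickart plus C2, idempotent lifting via CS-Rickart plus C3) constitute a complete substitute. What your route buys is worth noting: nowhere do you actually use that $M$ is a self-generator, so your argument proves the equivalence for arbitrary modules, while the paper's method needs self-generation precisely to obtain $\Delta(S)\subseteq Z(S_S)$. This also means your closing commentary misplaces the role of that hypothesis: the idempotents in $Sf$ and $gS$ come from semiregularity alone, and the representing homomorphism $g$ comes from the isomorphism $N\cong e_0M$, not from self-generation. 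Two small details you should write out: in the lifting step, $A\cap B=0$ needs its one-line proof (a nonzero submodule of $A\cap B$ meets $\Ker(u)$ by essentiality, and that intersection in turn meets $\Ker(u-1)$, contradicting $\Ker(u)\cap\Ker(u-1)=0$), and the identity $\Ker(u-u^2)=\Ker(u)\oplus\Ker(1-u)$ follows from writing $x=u(x)+(x-u(x))$.
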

\begin{proof} $(1)\Rightarrow (2)$ Assume that  $S$ is a semiregular ring with $J(S)=\Delta(S)$. As $M$ is a self-generator, $J(S)=\Delta(S)\leq Z(S_S)$. We deduce that $S$ is  right C2.  This gives  that $M$ is a C2-module by \cite[Theorem 7.14(1)]{NY1}. Let $\alpha: M\to M$ be an endomorphism of $M$. As $S$ is a semiregular ring, there exists $\beta\in S$ such that $\beta=\beta\alpha\beta$ and $\alpha-\alpha\beta\alpha\in J(S)$ by \cite[Theorem 2.9]{Ni}.  Call $e=1-\beta\alpha$. Then $e^2=e\in S$. As $\alpha-\alpha\beta\alpha\in \Delta(S)$, $\Ker(\alpha-\alpha\beta\alpha)\leq_eM$ and hence $\Ker(\alpha-\alpha\beta\alpha)\cap e(P)\leq_ee(M)$. It is easily to check that $\Ker(\alpha-\alpha\beta\alpha)\cap e(M)=\Ker(\alpha)$. We deduce that $\Ker(\alpha)\leq_e e(M)$.

$(2)\Rightarrow (1)$ By \cite[Theorem 3.2]{QKT}.
\end{proof}

\begin{cor}The following conditions are equivalent for a self-generator module $M$ with $S=\End(M^{(\mathbb{N})})$:
\begin{enumerate}
\item $S$ is a semiregular ring with $J(S)=\Delta(S)$.
\item $M^{(\mathbb{N})}$ is a CS-Rickart and C2 module.
\item $M^{(\mathbb{N})}$ is a SIP-CS and C2 module.
\end{enumerate}
\end{cor}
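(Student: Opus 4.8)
The plan is to derive all three equivalences from results already established in the paper, with the isomorphism $M^{(\mathbb{N})}\cong M^{(\mathbb{N})}\oplus M^{(\mathbb{N})}$ serving as the engine that lets the ``doubling'' hypotheses of the earlier corollaries be satisfied automatically. Write $N:=M^{(\mathbb{N})}$ and $S=\End(N)$, so that $\Delta(S)=\{f\in S\mid \Ker(f)\leq_e N\}$.

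I would begin with the preliminary observation that, because $M$ is a self-generator, the module $N=M^{(\mathbb{N})}$ is again a self-generator. Granting this, the equivalence $(1)\Leftrightarrow(2)$ is immediate: it is exactly Theorem \ref{csn} read for the self-generator $N$ with $S=\End(N)$, since condition $(1)$ here is the statement that $S$ is semiregular with $J(S)=\Delta(S)$, and condition $(2)$ here is precisely that $N$ is CS-Rickart and $C2$.

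Next, $(2)\Rightarrow(3)$ is the easy direction: every CS-Rickart module has the SIP-CS property (recalled in the introduction, via \cite[Proposition 1]{AB}), so if $N$ is CS-Rickart and $C2$, then $N$ is SIP-CS and $C2$, the $C2$ condition simply being carried along. The substantive direction is $(3)\Rightarrow(2)$, where the key step is to exploit $N\cong N\oplus N$. Assuming $N$ is SIP-CS and $C2$, and since the SIP-CS and $C2$ properties pass to isomorphic copies, the module $N\oplus N\cong N$ is also SIP-CS with the $C2$ condition. Applying Corollary \ref{hq3:4}(1) to $N$ (whose square $N\oplus N$ is SIP-CS with $C2$) yields that $N$ is CS-Rickart; as $N$ is $C2$ by hypothesis, condition $(2)$ holds. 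Assembling the three implications gives the stated equivalence.

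The step I expect to be the main obstacle is the preliminary claim that $M^{(\mathbb{N})}$ remains a self-generator, which is what makes Theorem \ref{csn} applicable. A trace computation shows that generation by $M^{(\mathbb{N})}$ coincides with generation by $M$, so the claim reduces to verifying that $M$ generates \emph{every} submodule of $M^{(\mathbb{N})}$, not merely every submodule of $M$; since generation is not closed under submodules or extensions in general, this closure property is the one place where a genuine argument (or an explicit citation to the closure of self-generators under direct sums) is needed. Once that is secured, the remainder is a formal assembly of Theorem \ref{csn}, the implication CS-Rickart $\Rightarrow$ SIP-CS, and Corollary \ref{hq3:4} through the doubling isomorphism; note that the self-generator hypothesis is used only for $(1)\Leftrightarrow(2)$, while $(2)\Leftrightarrow(3)$ needs only the square isomorphism.
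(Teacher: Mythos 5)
Your proposal is correct and essentially identical to the paper's own proof, which reads in full ``By Proposition \ref{pro:reCIP} and Theorem \ref{csn}'': there, $(1)\Leftrightarrow(2)$ is Theorem \ref{csn} applied to the module $M^{(\mathbb{N})}$, $(3)\Rightarrow(2)$ comes from $M^{(\mathbb{N})}\cong M^{(\mathbb{N})}\oplus M^{(\mathbb{N})}$ together with Proposition \ref{pro:reCIP} (your use of Corollary \ref{hq3:4} is the same result repackaged), and $(2)\Rightarrow(3)$ is the known implication CS-Rickart $\Rightarrow$ SIP-CS. The step you single out as the main obstacle --- that $M^{(\mathbb{N})}$ is again a self-generator, which is exactly what Theorem \ref{csn} needs --- is passed over in silence by the paper, which simply invokes Theorem \ref{csn} for $M^{(\mathbb{N})}$ under the hypothesis that $M$ is a self-generator; so your explicit identification of this point makes your write-up, if anything, more careful than the published argument.
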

\begin{proof}By Proposition \ref{pro:reCIP} and  Theorem \ref{csn}.
\end{proof}
\section*{Acknowledgements}The third author was supported by the Russian Government Program of Competitive Growth of Kazan Federal University. The author would like to thank the members of Department of Algebra and Mathematical Logic, Kazan (Volga Region) Federal University for their hospitality.
\bibliographystyle{amsplain}

\begin{thebibliography}{99}
\bibitem {AT14} A. N. Abyzov, A. A. Tuganbaev, \emph{Modules in which sums or intersections of two direct summands are direct summands}, Fundam. Prikl. Mat. 19:1 (2014) 3-11.
\bibitem{AB} A. N. Abyzov, T. H. N. Nhan, \emph{CS-Rickart Modules}, Lobachevskii Journal of Mathematics  35:4 (2014) 317-326.\label{AB}
\bibitem{AH02} M. Alkan, A. Harmanci, \emph{On Summand Sum and Summand
Intersection Property of Modules}, Turkish J. Math 26 (2002) 131-147.
\bibitem{YAI} I. Amin, Y. Ibrahim, M.F. Yousif, {\it D3-modules},  Comm. Algebra
42:2 (2014) 578-592.
\bibitem{Amin} I. Amin, Y. Ibrahim, M.F. Yousif, {\it C3-modules}, Algebra Colloq. 22 (2015) 655-670
\bibitem{AF} F.W. Anderson, K. R. Fuller, Rings and Categories of Modules,
Springer-Verlag, New York, 1974.
\bibitem{CIYZ}  V. Camillo, Y. Ibrahim, M. Yousif, Y. Zhou, \emph{Simple-direct-injective modules}, J. Algebra 420 (2014) 39-53.
\bibitem{JC} J. Clark, C. Lomp, N. Vanaja, R. Wisbauer, \emph{Lifting Modules. Supplements and Projectivity in Module Theory}, Frontiers in Mathematics, Birkh\"{a}user, Basel-Boston-Berlin, 2006.
\bibitem{DHSW} N. V. Dung, D. V.   Huynh, P. F. Smith,  R.  Wisbauer, \emph{Extending modules},  Pitman Research Notes in Math.  313, Longman, Harlow, New York, 1994.
\bibitem{Enochs} E.E. Enochs, \emph{Injective and .at covers, envelopes and resolvents}, Israel J. Math. 39 (1981) 189-209.
\bibitem{Faith} C. Faith, Algebra II. Ring Theory, Springer-Verlag, New York, 1967.
\bibitem{G89} J. L. Garcia, \emph{Properties of Direct Summands of Modules}, Comm.
Algebra 17:1 (1989) 73-92.\label{Ga}
\bibitem {HHO05}   A. Hamdouni, A. Harmanci, A.\c{C}. \"{O}zcan, \emph{Characterization of modules and rings by the summand intersection property and the summand sum property}, JP Jour.Algebra, Number Theory \& Appl. 5:3 (2005) 469-490.
\bibitem{H89}J. Hausen, \emph{Modules with the Summand Intersection Property},
Comm. Algebra 17:1 (1989) 135-148.\label{HS}
\bibitem{IKQY} Y. Ibrahim, M. Tamer Kossan, T.C. Quynh, M. Yousif, {\it Simple-direct-projective modules}, Comm. Algebra 44 (2016) 5163-5178.
\bibitem{s2} F. Karabacak, A. Tercan, \emph{On modules and matrix rings with SIP-extending}, Taiwanese
J. Math. 11 (2007) 1037-1044.\label{s2}
\bibitem{GL1} G. Lee, S. T. Rizvi, C. S. Roman, \emph{Rickart Modules}, Comm. Algebra 38 (2010) 4005-4027.\label{GL1}
\bibitem{GL3} G. Lee, S. T. Rizvi, C. S. Roman, \emph{Direct sums of Rickart modules}, J. Algebra 353:1 (2012) 62-78.\label{GL3}
\bibitem{MM} S. H. Mohammed, B. J.  M\" uller: Continous and Discrete Modules, London Math. Soc. LN 147:  Cambridge Univ. Press., 1990.
\bibitem{Ni} W. K. Nicholson, \emph{Semiregular modules and rings}, Canad. J. Math. 28 (1976) 1105-1120.
\bibitem{NY1} W. K. Nicholson, M. F. Yousif, \emph{Quasi-Frobenius Rings}, Cambridge Univ. Press., 2003.
\bibitem{QKT} T. C. Quynh, M. T. Kosan, L. V. Thuyet,  \emph{On (semi)regular morphisms}, Comm. Algebra 41:8 (2013) 2933-2947.
\bibitem{s3} Y. Talebi, A. R. M. Hamzekolaee, \emph{On SSP-lifting modules}, East-West Journal of Mathematics 01 (2013) 1-7. \label{s3}
\bibitem{Wi} G. V. Wilson, \emph{Modules with the summand intersection property}, Communications in Algebra 14:1 (1986) 21-38.\label{Wi}
\bibitem{W}  R. Wisbauer, \emph{Foundations of Module and Ring Theory},  Gordon and Breach, Reading, 1991.
\end{thebibliography}

\end{document}